\newtheorem{Th}{Theorem}[section]
\newtheorem{Cor}[Th]{Corollary}
\newtheorem{Lem}[Th]{Lemma}
\newtheorem{Prop}[Th]{Proposition}
\newtheorem{Rem}[Th]{Remark}
\newtheorem{claim-num}{Claim}
\def\aut#1{\mbox{\rm Aut}(#1)}
\def\av#1{\overline{#1}}
\def\a{\alpha}
\def\cA{\mathcal A}
\def\cB{\mathcal B}
\def\cU{\mathcal U}
\def\cX{\mathcal X}
\def\cY{\mathcal Y}
\def\cZ{\mathcal Z}
\def\cP{\mathcal P}
\def\eps{\varepsilon}
\def\N{\mathbf N}
\def\nc{\mbox{\rm NC}}
\def\s{\sigma}
\def\str#1{\langle#1\rangle}
\def\stuple#1{#1_1,\ldots,#1_s}
\def\sym#1{\mbox{\rm Sym}(#1)}
\def\inn#1{\mbox{\rm Inn}(#1)}
\def\inv{{}^{-1}}
\def\sle{\subseteq}
\def\Mod#1{\ (\mbox{\rm mod}\ #1)}
\def\Z{\mathbf Z}
\def\id{\mbox{\rm id}}
\def\text#1{\mbox{\rm #1}}
\renewcommand{\le}{\leqslant}
\renewcommand{\ge}{\geqslant}
\def\To{\Rightarrow}
\def\vk{\varkappa}
\def\rank{\operatorname{rank} }
\def\ptl{\partial}
\def\avst#1{ \overline{\mathstrut #1} }
\def\avR{\widehat R}
\numberwithin{equation}{section}
\begin{document}

\title[Small conjugacy classes]
{Small conjugacy classes in the automorphism groups of relatively free groups}
\author{Vladimir Tolstykh}
\address{Vladimir Tolstykh\\ Department of Mathematics\\ Yeditepe University\\
34755 Kay\i\c sda\u g\i \\
Istanbul\\
Turkey}
\email{vtolstykh@yeditepe.edu.tr}
\subjclass[2000]{20F28 (20E05, 20F19)}
\maketitle

\section{Small conjugacy classes}

Recall that a group $G$ is said to be {\it complete}
if all automorphisms of $G$ are inner and the center
of $G$ is trivial: $\aut G=\inn G \cong G.$ In a series of papers \cite{DFo0,DFo2,DFo} Dyer and Formanek justified
several conjectures by Baumslag on the automorphism towers
of finitely generated relatively free groups. In particular,
they proved that the automorphism group of a
free group $F_n$ of finite rank $n \ge 2$ is complete
(and hence the automorphism tower of $F_n$ terminates
already after two steps) \cite{DFo0}
and that the automorphism group $\aut{F_n/R'}$ of the group $F_n/R'$
is complete where $R$ is a characteristic subgroup of $F_n$ which is contained
in the commutator subgroup $F_n'$ of $F_n$ such that the
quotient group $F_n/R$ is residually torsion-free nilpotent
\cite{DFo}.

The aim of the present paper is to extend the latter result to
infinitely generated relatively free groups.

Let $G$ be an infinitely generated relatively
free group, let $\vk$ denote the rank
of $G,$ and let $\Gamma$ denote $\aut G.$
If $\cX$ is a basis of $G,$ we shall denote by $\sym \cX$
the subgroup of automorphisms of $G$
fixing $\cX$ setwise; members
of $\sym \cX$ will be called {\it permutational}
automorphisms of $G$ with respect to $\cX.$

Let $\{A_i : i \in I\}$ be a family
of subgroups of $G.$ We write $G = \circledast_{i \in I} A_i,$
if there is a basis $\cX$ of $G$
such that  $A_i=\str{\cA_i}$ where $\cA_i = \cX \cap A_i$ for every
$i \in I,$ the sets $\cA_i$ are pairwise disjoint,
and their union $\bigcup_{i \in I} \cA_i$
is $\cX.$

Clearly, if $G = \circledast_{i \in I} A_i,$ then given
any family $\{\s_i \in \aut{A_i} : i \in I\}$ of automorphisms
of subgroups $A_i,$ there is a uniquely determined
automorphism $\s$ of $G$ such that $\s|_{A_i} =\s_i$
for all $i \in I;$ we shall denote $\s$ by
$\circledast_{i \in I} \s_i.$

We shall use the standard notation of the theory of infinite
permutation groups. Thus, having a group $H$
acting on a set $X,$ we shall denote by $H_{(Y)}$
and by $H_{\{Y\}}$ the pointwise and the setwise
stabilizer of a subset $Y$ of $X$ in $H,$ respectively. Any symbol
of the form $H_{*_1,*_2}$ denotes the intersection
of subgroups $H_{*_1}$ and $H_{*_2}.$

In this section we shall study elements
of $\aut G$ having `small' conjugacy classes.
We shall say that the conjugacy
class $\s^\Gamma$ of a $\s \in \aut G$ is {\it small}
if the cardinality of $\s^\Gamma$ is at most $\vk=\rank G;$ equivalently,
the index $|\Gamma:C(\s)|=|\s^\Gamma|$ of the centralizer $C(\s)$ in $\Gamma$
is at most $\vk.$

We start with a statement on existence of
certain stabilizers in the subgroups of $\aut G$
having index $\le \rank G.$ Recall that a {\it moiety}
of an infinite set $I$ is any subset $J$ of $I$ with
$|J|=|I\setminus J|.$ The proof of the statement
uses the ideas and methods developed by Dixon, Neumann
and Thomas in \cite{DiNeuTho}.

\begin{Prop} \label{MStab-in-a-small-index-subgroup}
Let $G$ be a relatively free group
of infinite rank $\vk,$ $\cX$ a basis of $G$
and $\Sigma$ a subgroup of the automorphism group $\Gamma=\aut G$
of index at most $\rank G.$ Then

{\rm (i)} there is a subset $\cU$ of $\cX$ of
cardinality less than $\vk$ such that $\Sigma$
contains the subgroup $\sym \cX_{(\cU)}$
of permutational automorphisms with regard
to $\cX$ which fix $\cU$ pointwise;

{\rm (ii)} for every moiety $\cZ$ of $\cX \setminus \cU,$
$\Sigma$ contains the subgroup
$\Gamma_{(\cX \setminus \cZ),\{ \str{\cZ} \}}$
of automorphisms of $G$ which fix $\cX \setminus \cZ$
pointwise and preserve the subgroup $\str{\cZ}$
generated by $\cZ;$

{\rm (iii)} for every moiety $\cZ=\{z_i : i \in I\}$ of
$\cX \setminus \cU,$ and for every subset $\{v_i : i \in I\}$
of the subgroup $\str \cU$ generated by $\cU,$ $\Sigma$
contains an automorphism $\s$ of $G$ which fixes the
set $\cX \setminus \cZ$ pointwise and takes
an element $z_i$ of $\cZ$ to $z_i v_i$:
\begin{alignat}3 \label{U-transvec}
&\s x   &&=x,              &&x \in \cX \setminus \cZ,\\
&\s z_i &&= z_i v_i, \qquad &&i \in I. \nonumber
\end{alignat}
\end{Prop}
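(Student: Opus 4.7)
My plan is to prove the three parts in the order (i), (iii), (ii), with part (i) doing the main technical work via a Dixon--Neumann--Thomas style argument on $\sym\cX$, and parts (ii), (iii) bootstrapping from (i) through conjugation arguments adapted to the relatively free setting.

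For part (i), set $\Sigma_0 := \Sigma \cap \sym\cX$; restriction cannot raise the index, so $|\sym\cX:\Sigma_0|\le \vk = |\cX|$. The claim reduces to a purely permutation-theoretic statement: \emph{every subgroup of $\sym\cX$ of index at most $|\cX|$ contains the pointwise stabilizer of some subset of $\cX$ of cardinality strictly less than $|\cX|$}. I would prove it by contradiction in the DNT style: assuming no such $\cU$ exists, build by transfinite recursion a chain $\cU_\alpha$ ($\alpha<\vk^+$) with $|\cU_\alpha|<\vk$ together with permutations $\pi_\alpha \in \sym\cX_{(\cU_\alpha)}\setminus\Sigma_0$, arranged at each stage so that the cosets $\pi_\alpha \Sigma_0$ are pairwise distinct---for instance by having $\pi_{\alpha+1}$ move a point of $\cU_{\alpha+1}\setminus\cU_\alpha$ whose image distinguishes its coset from all earlier ones. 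This would produce $\vk^+$ distinct cosets of $\Sigma_0$, contradicting the index bound.

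For part (iii), let $\cZ' := (\cX\setminus\cU)\setminus\cZ$, fix a bijection $\varphi:\cZ\to\cZ'$, and let $\pi\in\sym\cX_{(\cU)}$ be the involution swapping $\cZ\leftrightarrow\cZ'$ via $\varphi$; by (i), $\pi\in\Sigma$. Consider the abelian transvection group $T := \{\s_{(w)}: w\in(\str\cU)^I\}\cong(\str\cU)^I$, where $\s_{(w)}(z_i)=z_iw_i$ and $\s_{(w)}$ fixes $\cX\setminus\cZ$ pointwise. The symmetric subgroup $\sym\cZ\le\sym\cX_{(\cU)}\le \Sigma$ acts on $T$ by permuting $I$-coordinates, so $T\cap\Sigma$, of index at most $\vk$ in $T$, is $\sym I$-invariant. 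A commutator-identity argument inside the direct power $(\str\cU)^I$, exploiting the highly transitive $\sym I$-action, forces any $\sym I$-invariant subgroup of index $\le|I|$ to coincide with $T$; hence $\s\in\Sigma$.

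For part (ii), given $\alpha\in\Gamma_{(\cX\setminus\cZ),\{\str\cZ\}}$, the restriction $\av\alpha=\alpha|_{\str\cZ}$ is an automorphism of the relatively free group $\str\cZ$ with basis $\cZ$. I would first note that the argument of (iii) goes through for \emph{any} $\cU_1\sge\cU$ with $|\cU_1|<\vk$, since $\sym\cX_{(\cU_1)}\le\sym\cX_{(\cU)}\le\Sigma$. Taking $\cU_1=\cU\cup\{z_j\}$ for $z_j\in\cZ$ and the moiety $\cZ\setminus\{z_j\}$ of $\cX\setminus\cU_1$, (iii) supplies all intra-$\cZ$ transvections $z_i\mapsto z_iw$ ($i\ne j$) with $w\in\str{\cU\cup\{z_j\}}$ in $\Sigma$. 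Together with the coordinate permutations of $\cZ$ from (i) and inversions obtained similarly, these generate $\aut{\str\cZ}$ by a Nielsen-type presentation valid for relatively free groups, so $\av\alpha\in\Sigma$ and hence $\alpha\in\Sigma$. The main obstacle I expect is the transfinite bookkeeping in part (i); a secondary difficulty in part (iii) is the commutator/invariance argument promoting ``small index in $T$'' to ``equal to $T$'', which requires exploiting the $\sym I$-action precisely.
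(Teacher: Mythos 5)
Your part (i) essentially re-proves the Dixon--Neumann--Thomas theorem that the paper simply cites, which is fine. The divergence, and the trouble, is in how you treat (ii) and (iii). The paper handles both by one device applied to a \emph{general} $\alpha \in \aut{\str\cZ}$: it partitions $\cX\setminus\cU$ into $\vk$ moieties and then sub-partitions each into countably many pieces, builds from $\alpha$ the ``alternating'' automorphism $\beta(\alpha)=\alpha\circledast\alpha\inv\circledast\id\circledast\alpha\circledast\alpha\inv\circledast\id\circledast\cdots$ on one moiety, and forms the family $\Lambda_\alpha=\{\circledast_{i}\beta(\alpha)^{\varepsilon_i}\}$ of size $2^\vk$. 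Because $|\Gamma:\Sigma|\le\vk<2^\vk$, two members land in one coset of $\Sigma$, and the quotient $\gamma\in\Sigma$ still carries the alternating internal pattern; it is precisely that pattern which lets permutations from $\sym\cX_{(\cU)}\le\Sigma$ clean $\gamma$ up to the desired automorphism. Your (iii) runs the pigeonhole at the block level only: you get some nontrivial $v\in T\cap\Sigma$ whose $i$-th coordinate is $w_i^{\delta_i}$ with $\delta_i\in\{0,\pm1\}$, and assert that a ``commutator-identity argument'' with the $\sym I$-action forces $T\cap\Sigma=T$. That conclusion is true, but your sketch does not prove it: nothing in the pigeonhole prevents, say, all $\delta_i=1$ with $w$ a constant tuple, in which case $v$ is a constant sequence fixed by every element of $\sym I$, and no conjugation can isolate a single coordinate. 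The extra room that makes the isolation work is exactly the internal $(\alpha,\alpha\inv,\id,\dots)$ sub-structure the paper builds in from the start; an equivalent device must be introduced for your step to close.

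The deduction of (ii) from (i) and (iii) is the genuine gap. You claim that permutations of $\cZ$, inversions, and (iii)-type simultaneous transvections with multipliers in $\str{\cU_1}$, $|\cU_1|<\vk$, generate $\aut{\str\cZ}$ ``by a Nielsen-type presentation valid for relatively free groups''; for infinite rank this is false, and one can see it already by abelianizing. Take $\str\cZ$ free on $\cZ=\{z_i : i\in I\}$, split $I$ into two moieties indexed ``even'' and ``odd,'' and let $\av\alpha(z_{2i})=z_{2i}$, $\av\alpha(z_{2i+1})=z_{2i}z_{2i+1}$; this $\av\alpha$ extends to an element of $\Gamma_{(\cX\setminus\cZ),\{\str{\cZ}\}}$. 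On the abelianization, a permutation contributes a permutation matrix, an inversion a signed diagonal matrix, and each of your (iii)-transvections a matrix $I+N$ with $N$ supported on fewer than $\vk$ columns (those labelled by $\cU_1$). An easy induction shows that any finite product of such matrices has the form $S+Q$ with $S$ a signed permutation matrix and $Q$ supported on fewer than $\vk$ columns. But the matrix of $\av\alpha$ is $I+N_\alpha$ with $N_\alpha$ supported on the $\vk$-many even columns; for a column $2i$ missed by $Q$ the identity $S=I+N_\alpha-Q$ would force $S_{\cdot,2i}=e_{2i}+e_{2i+1}$, two nonzero entries in one column of a signed permutation matrix, which is impossible. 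Thus $\av\alpha$ is not a finite product of your proposed generators, and (ii) cannot be reduced to (iii) in the way you describe. This is exactly why the paper's proof of (ii) never decomposes $\alpha$: it processes an arbitrary $\alpha$ whole by means of the family $\Lambda_\alpha$.
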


\begin{proof} (0). Before proving (i), we are
going first to do some preliminary work for (ii) and (iii).

(a). Let $\cY$ be a subset of $\cX$
of cardinality $\vk=\rank G.$ We partition $\cY$
into $\vk$ moieties:
$$
\cY = \bigsqcup_{i \in I} \cY_i
$$
and then partition every $\cY_i$ into countably
many moities:
$$
\cY_i=\bigsqcup_{k \in \N} \cY_{i,k}.
$$
Fix an index $i_0$ of $I$ and consider any
automorphism $\a$ of the subgroup $\str{\cY_{i_0,0}}$
generated by $\cY_{i_0,0}.$ We extend $\a$ on $\str{\cY_{i_0}}$ as follows:
\begin{equation}
\beta(\alpha) = (\a \circledast \a\inv \circledast \id) \circledast (\a \circledast \a\inv \circledast \id) \circledast \ldots
\end{equation}
Here, as it is quite commonly done for simplicity's
sake, any $\a$ (resp. $\a\inv,$ resp. $\id$)
in the right-hand side of (\theequation) rather {\it indicates} that the action
of the restriction of $\beta(\alpha)$ on a subgroup $\str{\cY_{i_0,k}}$
is isomorphic to the action of $\a$ (resp. $\a\inv,$ resp. $\id$)
on $\str{\cY_{i_0,0}};$ the corresponding isomorphism
of actions is supposed to be induced by a bijection
from $\cY_{i_0,0}$ onto $\cY_{i_0,k}$. Any
$\circledast$-product of automorphisms below
must be treated in a similar fashion.

We then consider the family $\Lambda_\alpha$ of automorphisms of $G$ such
that
\begin{equation}
\lambda=\id_{\str{\cX \setminus \cY}} \circledast (\circledast_{i \in I} \beta(\alpha)^{\varepsilon_i})
\end{equation}
where $\varepsilon_i=0,1$ ($i\in I$).

Observe that $|\Lambda_\alpha|=2^\vk.$
Therefore for any subgroup $H$ of $\Gamma$ having
index less than $2^\vk,$ there are distinct $\lambda_1,\lambda_2 \in \Lambda_\alpha$
with $\lambda_1 \lambda_2\inv \in H.$ Clearly,
the product $\lambda_1 \lambda_2\inv$ is also
of the form (\theequation) with $\eps_i$
equal either to $0,$ or to $1,$ or to $-1$
($i \in I$).

(b). Observe, for future use, that there are permutational
automorphisms $\rho_1,\rho_2 \in \aut{\str{\cY_{i_0}} }$
with regard to the basis $\cY_{i_0}$ such that $\beta(\alpha)^{\rho_1} \beta(\alpha)$ equals the
trivial automorphism of $\str{\cY_{i_0}}$ and
$\beta(\alpha)^{\rho_2} \beta(\alpha)$ equals the automorphism
\begin{equation}
(\a \circledast \id \circledast \id) \circledast  (\id \circledast \id \circledast \id) \circledast \ldots,
\end{equation}
that is, coincides with $\a$ on $\str{\cY_{i_0,0}}.$
The similar arguments applies to $\beta(\a)\inv$:
there are permutational automorphism $\mu_1,\mu_2 \in \aut{\str{\cY_{i_0}} }$
such that $[\beta(\a)\inv]^{\mu_1} \beta(\a)\inv$
is the trivial automorphism of $\str{\cY_{i_0}},$ and
$[\beta(\a)\inv]^{\mu_2} \beta(\a)\inv$
is the automorphism (\theequation).

(i). As
$$
|\sym\cX : \sym \cX \cap \Sigma| \le |\Gamma : \Sigma| \le \rank G=|\cX|,
$$
we get that $\sym \cX \cap \Sigma$ is a subgroup of
$\Pi=\sym \cX$ of index at
most $|\cX|.$ Then by Theorem $2^{\mbox{\normalsize$\flat$}}$
of \cite{DiNeuTho}, there is a subset $\cU$ of $\cX$ of cardinality
less than $|\cX|$ such that
$$
\Pi_{(\cU)} \le \Pi \cap \Sigma=\sym \cX\cap \Sigma.
$$

(ii). We apply the considerations in (0) to the set $\cY =\cX
\setminus \cU.$ We partition $\cY$ as in (0), assuming
that $\cY_{i_0,0}=\cZ.$ Take a $\s \in \Gamma_{(\cX \setminus \cZ),\{\str{\cZ}\}}$
and let
$$
\alpha=\sigma|_{\str{\cZ}}=\sigma|_{\str{\cY_{i_0,0}}}.
$$

By the conclusion made in (a) in (0), as the index
of $\Sigma$ in $\Gamma$ is at most $\vk < 2^\vk,$ $\Sigma$ contains an element
of $\Lambda_\a,$ that is, there are $\varepsilon_i=0,1,-1$
($i \in I$) some of which are nonzero such that
$$
\gamma=\id_{\str{\cU}} \circledast (\circledast_{i \in I} \beta(\alpha)^{\varepsilon_i}) \in \Sigma.
$$
Using (b) in (0), it is then rather easy to find a permutational
automorphism $\pi$ in $\Pi_{(\cU)} =\sym \cX_{(\cU)}$
which fixes $\cU$ pointwise and
such that the action of $\gamma^\pi \gamma$
on some $\cY_{j,0}$
is isomorphic to the action of $\a$
and the action of $\gamma^\pi \gamma$
 on all other sets $\cY_{i,k}$ is trivial.
Indeed, let
$$
I_0=\{i \in I : \eps_i =0\}, \quad
I_1=\{i \in I : \eps_i =1\}, \quad
I_{-1}=\{i \in I : \eps_i =-1\}.
$$
If $I_1$ is empty, we switch to $\gamma\inv,$
also a member of $\Sigma.$ Thus, without
loss of generality we may assume that
$I_1 \ne \varnothing.$ Let us consider the more difficult
case, when $i_0 \notin I_1.$ Take a $j$ in $I_1,$
and define $\pi \in \Pi_{(\cU)}$ as follows:
\begin{equation}
\pi =\id_{(\cU)} \circledast (\circledast_{i \in I_{0}} \id)
\circledast \rho_1
\circledast (\circledast_{i \in I_1, i \ne j} \rho_2)
\circledast (\circledast_{i \in I_{-1}} \mu_2),
\end{equation}
where $\rho_1,\rho_2,\mu_2$ were defined
in (b) in (0), and $\rho_1$ in (\theequation) describes
the action of $\pi$ on $\str{\cY_j}.$

At the next step we conjugate
$\gamma^\pi \gamma \in \Sigma$ by an appropriately chosen
permutational automorphism $\pi_1 \in \Pi_{(\cU)} \le \Sigma,$
interchanging $\cY_{i_0}$ and $\cY_j,$
while fixing pointwise $\cU$ and all other
$\cY_i,$ thereby obtaining $\s.$

(iii). The proof is basically the same as in (i).
Having an automorphism $\s$ of $G$ with \eqref{U-transvec},
we find in $\Sigma$ an automorphism $\s^*$ of $G$ whose
action on each of some $\vk$ moieties of $\cX \setminus \cU,$
into which $\cX \setminus \cU$ is partitioned and
one of which contains $\cZ,$ is either trivial,
or isomorphic to the action of $\beta(\sigma|_{\str{\cZ}})$ (suitably
defined) on the moiety containing $\cZ,$ or
to isomorphic the action of $\beta(\sigma|_{\str{\cZ}})\inv.$
Then for a suitable $\pi,\pi_1 \in \Pi_{(\cU)}$ we have
that an element $((\sigma^*)^\pi \s^*)^{\pi_1}$
of $\Sigma$ is equal to $\s.$
\end{proof}

\begin{Prop} \label{SmNormCls}
Let $G$ be a relatively
free group of infinite rank $\vk,$ $\cX$ a basis of $G.$
The conjugacy class of a $\s \in \aut G$
is small if and only if
there are finitely many elements $u_1,\ldots,u_s$
of $\cX$ and a term $w(*;*_1,\ldots,*_s)$ of the
language of group theory {\rm(}a group
word in symbols $*,\stuple *${\rm)} such that
\begin{equation} \label{w_sets_the_image}
\s(x)=w(x;u_1,\ldots,u_s)
\end{equation}
for all $x \in \cX$ and
\begin{equation} \label{w_is_hom}
w(xy;u_1,\ldots,u_s) = w(x;u_1,\ldots,u_s) \cdot w(y;u_1,\ldots,u_s)
\end{equation}
for all $x,y \in \cX$ {\rm(}in effect, $\s(g)=w(g;u_1,\ldots,u_s)$ for all $g \in G${\rm)}.
\end{Prop}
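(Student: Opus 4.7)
My plan is to prove both directions; the sufficient direction is immediate, since if $\sigma(g) = w(g; u_1, \ldots, u_s)$ on all of $G$, then the pointwise stabilizer $\Gamma_{(u_1, \ldots, u_s)}$ is contained in $C(\sigma)$ and has index at most $\vk^s = \vk$ in $\Gamma$. For the necessary direction, suppose $[\Gamma : \Sigma] \le \vk$ with $\Sigma = C(\sigma)$, and apply Proposition \ref{MStab-in-a-small-index-subgroup} to $\Sigma$ to obtain a subset $\cU \subseteq \cX$ of cardinality less than $\vk$ enjoying properties (i)--(iii).

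For $x \in \cX \setminus \cU$, every $\pi \in \sym\cX_{(\cU \cup \{x\})}$ lies in $\sym\cX_{(\cU)} \le \Sigma$, so $\pi \sigma(x) = \sigma(\pi x) = \sigma(x)$. A routine substitution argument---express $\sigma(x)$ in some basis elements, transpose any offending $y \notin \cU \cup \{x\}$ with a fresh basis element, lift the resulting identity in $G$ to a law of the variety, and evaluate with the offending free variable set to $1$---then shows $\sigma(x) \in \str{\cU \cup \{x\}}$. Fixing $x_0 \in \cX \setminus \cU$ and writing $\sigma(x_0) = w(x_0; u_1, \ldots, u_s)$ for a group word $w$ and finitely many $u_i \in \cU$, transposing $x_0$ with any other $x \in \cX \setminus \cU$ by an element of $\sym\cX_{(\cU)}$ yields $\sigma(x) = w(x; u_1, \ldots, u_s)$ uniformly.

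The step I expect to be the main obstacle is showing that $w$ is a homomorphism in its first argument; here property (ii) is decisive. Pick a moiety $\cZ$ of $\cX \setminus \cU$ containing distinct elements $z_0, z_1$, and let $\tau$ be the Nielsen-type automorphism of $\str\cZ$ with $\tau z_0 = z_1 z_0$ and $\tau z = z$ for $z \in \cZ \setminus \{z_0\}$, extended by the identity on $\cX \setminus \cZ$; then $\tau \in \Gamma_{(\cX \setminus \cZ),\{\str\cZ\}} \le \Sigma$. Since $u_1, \ldots, u_s \in \cU \subseteq \cX \setminus \cZ$ are fixed by $\tau$, the equality $\tau\sigma(z_0) = \sigma\tau(z_0)$ unfolds as
\[
w(z_1 z_0; u_1, \ldots, u_s) = \sigma(z_1 z_0) = w(z_1; u_1, \ldots, u_s)\, w(z_0; u_1, \ldots, u_s),
\]
an identity among distinct basis elements which lifts to the variety law $w(XY; U_1, \ldots, U_s) = w(X; U_1, \ldots, U_s)\, w(Y; U_1, \ldots, U_s)$.

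Finally, for $u \in \cU$, take the transvection $\tau_{z_0, u}$ supplied by (iii), which sends $z_0 \mapsto z_0 u$ and fixes the remaining basis elements; it belongs to $\Sigma$. Evaluating $\tau_{z_0, u} \sigma = \sigma \tau_{z_0, u}$ at $z_0$ and using the homomorphism law just established,
\[
w(z_0; u_1, \ldots, u_s)\, w(u; u_1, \ldots, u_s) = w(z_0 u; u_1, \ldots, u_s) = \sigma(z_0 u) = w(z_0; u_1, \ldots, u_s)\, \sigma(u),
\]
whence $\sigma(u) = w(u; u_1, \ldots, u_s)$. Therefore $\sigma$ and the endomorphism $g \mapsto w(g; u_1, \ldots, u_s)$ are homomorphisms of $G$ agreeing on the generating set $\cX$, so they coincide on all of $G$.
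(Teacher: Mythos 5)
Your proof is correct and follows essentially the same route as the paper: pass from the small conjugacy class to the low-index centralizer, apply Proposition~\ref{MStab-in-a-small-index-subgroup} to get the set $\cU$, use permutations fixing $\cU$ to pin down $\sigma(x) \in \str{\cU \cup \{x\}}$ and then to enforce a uniform word $w$, use a transvection from part (ii) to get the homomorphism identity, lift it to a law, and finally use a $\cU$-valued transvection from part (iii) to determine $\sigma$ on $\cU$. One small improvement over the paper: you correctly source the transvection $z_0 \mapsto z_0 u$ from part (iii), and you derive $\sigma(u) = w(u;\vec u)$ directly from the already-lifted homomorphism law without the paper's preliminary step verifying $\sigma(u) \in \str\cU$.
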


\begin{proof}
Since the conjugacy class of $\s$ is of cardinality $\le \vk,$
the index of the centralizer $C(\s)$ of $\s$
in $\aut G$ is at most $\vk.$ Hence by Proposition
\ref{MStab-in-a-small-index-subgroup}, for $\Sigma=C(\s)$ there is a subset
$\cU$ of $\cX$ having the properties (i-iii) listed
in this proposition. In particular, $\Pi_{(\cU)} =\sym \cX_{(\cU)} \le C(\s).$

Consider an $x \in \cX \setminus \cU.$ Let $w_x(x; \vec y, \vec u)$
be a term of the language of group theory such that
$$
\s x = w_x(x; \vec y, \vec u_x)
$$
where $\vec y$ is a tuple of elements of $\cX \setminus \cU$
none of which equals $x$ and $\vec u_x$ is a tuple
of elements of $\cU.$

(a) A permutational automorphism
in $\Pi_{(\cU)}$ which fixes $x$ and takes $\vec y$
to a tuple $\pi \vec y$ with $\pi \vec y \cap \vec y =\varnothing$
must commute with $\s.$ Then $\pi \s \pi\inv x = \s x$
implies that
$$
w_x(x; \pi \vec y, \vec u_x) = w_x(x; \vec y, \vec u_x).
$$
Take an endomorphism $\eps$ of $G$ which sends all members
of $\pi \vec y$ to $1$ and fixes all other elements
of $\cX.$ Apply $\eps$ to the both parts of the
last equation:
\begin{align*}
\eps(w_x(x; \pi \vec y, \vec u_x)) =
w_x(\eps(x); \eps(\pi \vec y), \eps(\vec u_x))
=w_x(x;1,\ldots,1, \vec u_x)
=w_x(x; \vec y, \vec u_x).
\end{align*}
It follows that $\s x \in \str{x,\vec u_x},$ and we can
assume that
$$
\s x =w_x(x; \vec u_x).
$$

(b) Take another element $y$ in $\cX \setminus \cU.$
Again, $\s$ must commute with a permutational
automorphism $\rho$ in $\Pi_{(\cU)}$ interchanging
$x$ and $y.$ Comparing $\rho \s \rho\inv x$ and $\s x,$
we obtain that
$$
w_y(x; \vec u_y) = w_x(x; \vec u_x)
$$
But then
$$
w_y(y; \vec u_y) = w_x(y; \vec u_x),
$$
after forcing an endomorphism of $G$
fixing $\cU$ pointwise and taking $x$ to $y$
to act on the both parts of the preceding equation.
So the image $\s y$ of $y$ can be obtained
by replacing occurrences $x$ in $w_x(x;\vec u)$
by $y.$ We arrive therefore at the conclusion that
$$
\s z =w(z; \vec u)
$$
where $w(*;*_1,\ldots,*_s)$ is a fixed term and $\vec u$ is a fixed
tuple of elements of $\cU$ for all $z \in \cX \setminus \cU.$

(c) Let $x,y$ be distinct elements of $\cX \setminus \cU.$
The `transvection' $U$ which takes $x$ to $xy$ and fixes
all other elements of $\cX$ belongs to $C(\s)$ by
part (ii) of Proposition \ref{MStab-in-a-small-index-subgroup}.
The equality $U \s U\inv x = \s x$ implies then that
$$
w(xy; \vec u) w(y; \vec u)\inv = w(x; \vec u),
$$
or
\begin{equation}
w(xy; \vec u)= w(x; \vec u) \cdot w(y; \vec u).
\end{equation}
As $x,y,\vec u$ are all members of some basis of $G,$
\begin{equation} \label{w_is_a_hom_word}
w(ab; \vec u)= w(a; \vec u) \cdot w(b; \vec u).
\end{equation}
for every $a,b \in G$ (after acting on
the both parts of (\theequation) by an endomorphism of $G$
fixing $\vec u$ pointwise and taking
$x$ to $a$ and $y$ to $b$.)

(d). The argument similar to one we have used in (a)
shows that for every $v \in \cU,$ the image
$\s v$ of $v$ is in the subgroup generated
by $\cU.$

Take an $x \in \cX \setminus \cU,$ an element
$v \in \cU$ and another `transvection' $U_1$
which takes $x$ to $xv$ and fixes $\cX \setminus
\{x\}$ pointwise. By part (ii) of
Proposition \ref{MStab-in-a-small-index-subgroup},
$U_1$ commutes with $\s.$ Observe that $U_1(\s v) =\s v,$
since $U_1$ stabilizes all elements of $\cU.$ Hence
$$
w(x;\vec u) =\s x = U_1 \s U_1\inv x =
U_1(\s(x v\inv)) = U_1(\s(x) \s(v\inv))=
w(xv;\vec u) \s(v\inv).
$$
By \eqref{w_is_a_hom_word}, $w(xv;\vec u) =w(x;\vec u) w(v;\vec u),$
whence $\s v = w(v;\vec u),$ completing the proof
of the necessity part.

Conversely, if a term $w(*;*_1,\ldots,*_s)$
and a tuple $\vec u$ of $\cX$ satisfy
\eqref{w_sets_the_image} and \eqref{w_is_hom},
then
$$
\s(g) =w(g;\vec u)
$$
for all $g \in G.$ Let $\pi \in \aut G.$ Hence
$$
\pi \s \pi\inv g=\pi(\s (\pi\inv g))= \pi(w(\pi\inv g,\vec u))=
w(g; \pi \vec u)
$$
for all $g \in G.$ Therefore there are at most $\vk$ conjugates of $\s,$
since there are at most $\vk$ elements in the orbit
of the tuple $\vec u$ under $\aut G.$
\end{proof}

Working with a relatively free group $G$
we shall denote by $\tau_g$
the inner automorphism of $G$ determined
by a $g \in G.$

\begin{Prop} \label{S_is_def}
Let $G$ be a centerless relatively free group
of infinite rank $\vk.$

{\rm (i)} Suppose that the cardinality of the conjugacy
class $\rho^\Gamma$ of a $\rho \in \Gamma$ is less than $\vk.$ Then
$\rho$ is the identity;

{\rm (ii)} the cardinal
$$
\min \{|\pi^\Gamma| : \pi \in \Gamma, \pi \ne \id\}
$$
is equal to $\vk;$

{\rm (iii)} the conjugacy class of a nonidentity $\s \in \Gamma$
is small if and only if
$
|\s^\Gamma| \le |\pi^\Gamma|
$
for every nonidentity $\pi \in \Gamma;$

{\rm (iv)} the subgroup $S$ of all elements
of $\Gamma$ whose conjugacy class is small is
a characteristic subgroup of $\Gamma.$

\end{Prop}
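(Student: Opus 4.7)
The content of Proposition \ref{S_is_def} is concentrated in (i); once (i) is in hand, the remaining three assertions follow with little more than bookkeeping. My plan for (i) is to apply Proposition \ref{SmNormCls} and argue by minimality of the number of parameters. Let $\rho \ne \id$; I want to show $|\rho^\Gamma| \ge \vk$. If $|\rho^\Gamma| > \vk$ there is nothing to prove, so assume the conjugacy class is small and use Proposition \ref{SmNormCls} to write $\rho(g) = w(g; u_1, \ldots, u_s)$ with $\vec u \subseteq \cX$; among all such representations, fix one with $s$ minimal.

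Now I split on $s$. If $s = 0$, then for every $\pi \in \Gamma$, $\pi \rho \pi\inv(g) = \pi(w(\pi\inv g)) = w(g) = \rho(g)$, so $\rho$ is central in $\Gamma$. In particular $\rho$ commutes with every $\tau_y$, giving $\tau_{\rho(y)} = \rho \tau_y \rho\inv = \tau_y$, which by $Z(G)=1$ forces $\rho(y) = y$ and hence $\rho = \id$, contradicting the assumption. If $s \ge 1$, then for every basis element $y \in \cX \setminus \{u_1, \ldots, u_s\}$ the transposition $(u_1\ y) \in \sym \cX \le \Gamma$ conjugates $\rho$ into the automorphism $g \mapsto w(g; y, u_2, \ldots, u_s)$; I claim the $\vk$ choices of $y$ produce pairwise distinct conjugates. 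Indeed, if $y \ne y'$ gave the same conjugate, the relation $w(x; y, u_2, \ldots, u_s) = w(x; y', u_2, \ldots, u_s)$ would hold for every $x \in G$, and restricted to the subgroup generated by the $s + 2$ distinct basis elements $\{x, y, y', u_2, \ldots, u_s\}$ --- which is itself relatively free of rank $s+2$ in the same variety --- it becomes a law of the variety; substituting $y \mapsto 1$ in that law then expresses $w$ independently of its second argument, contradicting the minimality of $s$. Hence $|\rho^\Gamma| \ge |\cX| = \vk$, proving (i).

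Parts (ii)--(iv) now drop out. For (ii), the inner automorphism $\tau_x$ with $x \in \cX$ realises the minimum: since $G$ is centreless, $\tau_u = \tau_v$ precisely when $u = v$, so $|\tau_x^\Gamma|$ equals the cardinality of the $\Gamma$-orbit of $x$, which is squeezed between $|\cX| = \vk$ and $|G| = \vk$ and hence equals $\vk$. Part (iii) is then a mere restatement, because `small' means $|\s^\Gamma| \le \vk$ and this is precisely $|\s^\Gamma| \le |\pi^\Gamma|$ for every nonidentity $\pi$, by (i) and (ii). For (iv), the bound $|(\s_1 \s_2)^\Gamma| \le |\s_1^\Gamma|\cdot|\s_2^\Gamma| \le \vk$ together with $|(\s\inv)^\Gamma| = |\s^\Gamma|$ makes $S$ a subgroup, conjugation-invariance is built into the definition, and any $\Phi \in \aut \Gamma$ satisfies $\Phi(\s)^\Gamma = \Phi(\s^\Gamma)$ and therefore preserves conjugacy-class sizes, so $\Phi(S) = S$. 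The main obstacle is the minimality step in (i): promoting the identity holding universally in the single group $G$ to a law of the variety, via the fact that subgroups generated by subsets of a basis of $G$ are themselves relatively free in the variety, is what makes the minimality assumption actually bite.
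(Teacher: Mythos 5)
Your proof is correct, but for part (i) it takes a genuinely different route from the paper. The paper conjugates $\rho$ by the $\vk$ inner automorphisms $\tau_x$, $x\in\cX\setminus\vec u$; since $|\rho^\Gamma|<\vk$ forces two of these to agree, it obtains $\tau_{\rho(x_1^{-1}x_2)}=\tau_{x_1^{-1}x_2}$ and then invokes centerlessness to get a fixed element $x_1^{-1}x_2$, which together with $\vec u$ sits in a basis, whence $w(g;\vec u)=g$ identically. Your version instead fixes a representation with the number of parameters $s$ minimal, conjugates by the $\vk$ transpositions $(u_1\ y)$, and shows these conjugates are pairwise distinct: a coincidence would give a relation among basis elements, hence a law of the variety (since subsets of $\cX$ generate relatively free subgroups), and substituting $1$ for one of the two interchangeable slots strips a parameter from $w$, contradicting minimality. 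Centerlessness enters your argument only in the degenerate case $s=0$, whereas the paper uses it at the heart of the computation; conversely you need the extra bookkeeping of checking that the shortened word $w'=w(\,\cdot\,;1,\vec u_0)$ still satisfies the homomorphism identity \eqref{w_is_hom} (it does, since that identity also lifts to a law in $\vec u$, but this must be said). Both proofs produce the same $\vk$ lower bound; the paper's is shorter and more self-contained, yours isolates more clearly what centerlessness is actually doing. Your treatment of (ii)--(iv) matches the paper's in spirit and is somewhat more explicit, in particular in spelling out the subadditivity $|(\s_1\s_2)^\Gamma|\le|\s_1^\Gamma|\cdot|\s_2^\Gamma|$ that makes $S$ a subgroup, a point the paper leaves implicit.
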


\begin{proof}
(i). Let $\cX$ be a basis of $G.$ By
Proposition \ref{SmNormCls}, there is a term
$w(*;*_1,\ldots,*_s)$ and elements $\vec u=\stuple u \in \cX$
such that
$$
\rho(g)=w(g;\vec u)
$$
for all $g \in G.$ As
$$
|\{\tau_x \rho \tau_x\inv : x \in \cX \setminus \vec u\}| \le
|\rho^\Gamma| < \vk,
$$
there are distinct $x_1,x_2 \in \cX \setminus \vec u$ such that
$$
\tau_{x_1} \rho \tau_{x_1}\inv  =
\tau_{x_2} \rho \tau_{x_2}\inv,
$$
or
$$
\rho \tau_{x_1\inv x_2} \rho\inv = \tau_{x_1\inv x_2}
$$
or $\rho(x_1\inv x_2) = x_1\inv x_2$ because
$G$ is centerless. Therefore
$$
w(x_1\inv x_2; \vec u) = x_1\inv x_2,
$$
whence $w(g;\vec u)=g=\rho(g)$ for all $g \in G,$
since the element $x_1\inv x_2$ and the elements
of the tuple $\vec u$ all occur in a suitable
basis of $G$

(ii). Write $\lambda$ for the cardinal
$$
\min \{|\pi^\Gamma| : \pi \in \Gamma, \pi \ne \id\}
$$
By (i), $\vk \le \lambda.$ On the other hand,
for any inner automorphism $\tau_g$ determined
by a nonidentity element $g \in G$
$$
\lambda \le |\tau_g^\Gamma|=\vk.
$$

(iii). By (ii).

(iv). By (iii).
\end{proof}

\begin{Cor}
Let $\mathfrak V$ be a variety of groups whose
free groups are centerless. Then for any infinitely
generated free groups $G_1,G_2 \in \mathfrak V$
$$
\aut{G_1} \cong \aut{G_2} \iff \rank(G_1) =\rank(G_2).
$$
\end{Cor}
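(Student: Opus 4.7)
The plan is to split the biconditional into its two directions. The forward direction $\rank(G_1)=\rank(G_2) \Rightarrow \aut{G_1} \cong \aut{G_2}$ should be immediate: two relatively free groups in the same variety $\mathfrak V$ with bases of the same cardinality are isomorphic (both satisfy the same universal mapping property on a set of that cardinality), so their automorphism groups are isomorphic. I would dispose of this in a single line.

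For the substantial direction, the strategy is to recover $\rank(G_i)$ from $\aut{G_i}$ as a purely group-theoretic invariant. Writing $\vk_i = \rank(G_i)$ and $\Gamma_i=\aut{G_i}$, both $\vk_i$ are infinite by hypothesis, and the free groups of $\mathfrak V$ are centerless, so Proposition \ref{S_is_def}(ii) applies to each $G_i$ and yields
$$
\vk_i = \min\{|\pi^{\Gamma_i}| : \pi \in \Gamma_i,\ \pi \ne \id\}.
$$
Given any group isomorphism $\Phi : \Gamma_1 \to \Gamma_2$, conjugacy classes are sent bijectively to conjugacy classes of the same cardinality and the identity to the identity, so the two minima above must coincide. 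This forces $\vk_1=\vk_2$, which is precisely what the corollary asserts.

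There is no real obstacle here: the hard work has already been packaged into Proposition \ref{S_is_def}, which identifies $\rank G$ with the minimum size of a nontrivial conjugacy class of $\aut G$, a quantity manifestly preserved under any group isomorphism. The only detail worth flagging is that the centerless hypothesis on the free groups of $\mathfrak V$ is exactly what is needed to invoke Proposition \ref{S_is_def} for both $G_1$ and $G_2$ simultaneously.
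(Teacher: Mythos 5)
Your proof is correct and matches the paper's intended argument exactly: the paper's one-line proof ``By Proposition \ref{S_is_def} (ii)'' is precisely the observation you spell out, namely that the rank is recovered as the minimum cardinality of a nontrivial conjugacy class in $\aut G$, which is invariant under group isomorphism, while the easy direction follows from uniqueness of relatively free groups of a given rank in a variety.
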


\begin{proof}
By Proposition \ref{S_is_def} (ii).
\end{proof}

In \cite{Fo} Formanek proved the subgroup
$\inn{F_n}$ of the automorphism group $\aut{F_n}$
of a free group $F_n$ of finite
rank $n \ge 2$ is the only free normal
subgroup of $\aut{F_n}$ of rank $n.$
Our next corollary extends this result
to free groups of infinite rank.

\begin{Cor}
Let $F=F_\vk$ be a free group of infinite rank $\vk.$
Then $\s \in \aut{F_\vk}$ has small conjugacy
class if and only if $\s$ is an inner
automorphism of $F_\vk.$ Consequently,
$\inn{F_\vk}$ is the largest {\rm(}free{\rm)} normal subgroup
of $\aut{F_\vk}$ of cardinality $\vk.$
\end{Cor}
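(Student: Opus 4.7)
The plan is to prove the equivalence in two directions and then deduce the maximality statement. The implication ``$\sigma$ inner $\To$ $\sigma$ has small conjugacy class'' is immediate: if $\sigma = \tau_g$, then $\pi \tau_g \pi^{-1} = \tau_{\pi(g)}$ for every $\pi \in \Gamma := \aut{F_\vk}$, so $|\sigma^\Gamma| \le |F_\vk| = \vk$.

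For the converse, I would invoke Proposition~\ref{SmNormCls} to obtain basis elements $u_1,\ldots,u_s \in \cX$ and a group word $w(t;u_1,\ldots,u_s)$ with $\sigma(g)=w(g;\vec u)$ for all $g \in F_\vk$, satisfying $w(xy;\vec u)=w(x;\vec u)\,w(y;\vec u)$ identically. The main obstacle is the following structural claim about free groups: any word $w(t;\vec u)$ in the free group on $t, u_1,\ldots,u_s$ for which this identity holds must have the form $w(t;\vec u) = c\, t\, c^{-1}$ for some $c \in \str{\vec u}$ (the degenerate case $w \equiv 1$ is ruled out because $\sigma$ is an automorphism of a non-trivial group). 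Granting this, one has $\sigma = \tau_c$, an inner automorphism. To prove the claim, I would put $w$ in normal form as an element of the free product $\str{\vec u} \ast \str{t}$,
$$
w = w_0 t^{n_1} w_1 t^{n_2} \cdots t^{n_k} w_k, \qquad w_i \in \str{\vec u},\ n_i \ne 0,\ w_i \ne 1 \text{ for } 0 < i < k,
$$
and compare the freely reduced forms of $w(xy;\vec u)$ and $w(x;\vec u)\, w(y;\vec u)$ inside the free subgroup $\str{x,y,\vec u}$ of $F_\vk$, for $x,y \in \cX$ distinct and disjoint from $\vec u$. In the former, the $x$ and $y$ letters are interleaved within each block $(xy)^{n_i}$, whereas in the latter all $x$-syllables precede every $y$-syllable. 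Exploiting that $(xy)^n \ne x^n y^n$ in a free group for $|n| \ge 2$, together with $(xy)^{-1} = y^{-1} x^{-1} \ne x^{-1} y^{-1}$, a syllabic comparison forces $k=1$ and $n_1 = 1$; then the cancellation required in the middle of $w(x;\vec u)\, w(y;\vec u)$ yields $w_1 w_0 = 1$, so $w = w_0\, t\, w_0^{-1}$ as claimed.

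For the ``consequently'' statement, let $N$ be any normal subgroup of $\Gamma$ of cardinality at most $\vk$. For every $\pi \in N$ the class $\pi^\Gamma$ is contained in $N$, so $|\pi^\Gamma| \le \vk$; hence $\pi$ has small conjugacy class and, by the equivalence just established, is inner. Therefore $N \le \inn{F_\vk}$. Since $F_\vk$ is centerless, $\inn{F_\vk} \cong F_\vk$ is itself a normal subgroup of $\Gamma$ of cardinality $\vk$ (and free of rank $\vk$), which establishes its maximality among normal subgroups of $\Gamma$ of that cardinality.
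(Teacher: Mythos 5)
Your proof is correct, but the route you take in the ``only if'' direction is genuinely different from the one in the paper. After choosing the subset $\cU$ via Proposition~\ref{SmNormCls}, the paper does \emph{not} analyse the word $w$ directly; instead it picks one basis element $x\in\cX\setminus\cU$, partitions the rest of $\cX\setminus\cU$ into two moieties, and uses Proposition~\ref{MStab-in-a-small-index-subgroup}(ii) to place three explicit automorphisms $\rho_1,\rho_2,\rho_3$ into $C(\s)$; their product $\rho$ inverts $x$ and conjugates everything else by $x$, and Lemma~4.2 of \cite{To:Towers} is then invoked to conclude that any automorphism commuting with $\rho$ must send $x$ to $vxv\inv$ or $vx\inv v\inv$ with $v\in\str\cU$. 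Only at the very end does the paper appeal to the homomorphism identity \eqref{w_is_hom}, solely to exclude the anti-homomorphism $z\mapsto vz\inv v\inv$. You instead extract the full normal form of $w$ in the free product $\str{\vec u}\ast\str t$ and show combinatorially that the multiplicativity identity $w(xy;\vec u)=w(x;\vec u)w(y;\vec u)$, read inside the free subgroup $\str{x,y,\vec u}$, forces $w(t;\vec u)=c\,t\,c\inv$. Your argument is more elementary and self-contained: it bypasses the external Lemma~4.2 entirely and gets the conclusion directly from Proposition~\ref{SmNormCls}. The cost is that your ``syllabic comparison'' is still a sketch --- to be fully rigorous you would want to say explicitly that in $w(x;\vec u)w(y;\vec u)$ every $\str x$-syllable strictly precedes every $\str y$-syllable in the reduced $\str x\ast\str y\ast\str{\vec u}$ normal form, whereas in $w_0(xy)^{n_1}w_1\cdots(xy)^{n_k}w_k$ this fails unless $k=1$ and $n_1=1$ (for $n_1=-1$ the order is reversed, for $|n_1|\ge 2$ the syllables interleave, and for $k\ge 2$ a $y$-syllable from $(xy)^{n_1}$ precedes an $x$-syllable from $(xy)^{n_2}$) --- but the idea is sound and routine to complete. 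One small accounting point: you should note $k\ge 1$ before writing the normal form, since if $w$ involved no $t$'s the identity would give $w=w^2$, hence $w=1$, contradicting that $\s$ is an automorphism. Your treatment of the ``consequently'' clause is correct and is essentially the intended argument.
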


\begin{proof}
Let the conjugacy class of a $\s \in \aut F$
be small. Take a basis $\cX$ of $F$ and choose
a subset $\cU$ of $\cX$ of cardinality $< \vk$
as in the proof of Proposition \ref{SmNormCls}.
Take an $x \in \cX \setminus \cU,$ and partition
$\cX \setminus (x \cup \cU)$ into two moieties:
$$
\cX \setminus (x \cup \cU) = \cY_0 \sqcup \cY_1.
$$
Then by Proposition \ref{MStab-in-a-small-index-subgroup} (ii) the following automorphisms
$\rho_1,\rho_2,\rho_3$ that act identically on $\cU$ belong to the centralizer
$C(\s)$ of $\s$:
\begin{alignat*} 7
\rho_1: \quad & x \to x,           && \rho_2: \quad && x \to x,               && \rho_3: \quad && x \to x\inv,\quad &&          \\
	 & y \to x\inv y x, \quad  &&               && y \to y,               &&               && y \to y,          && (y \in \cY_1),\\
	 & y \to y,                &&               && y \to x\inv y x, \quad &&               && y \to y,          && (y \in \cY_2).
\end{alignat*}
The product $\rho=\rho_3 \rho_2 \rho_1$ is an automorphism
of $F$ which fixes $\cU$ pointwise, inverts $x$
and takes every $y \in\cY_1 \cup \cY_2$ to its
conjugate by $x.$ It is proved in \cite[Lemma 4.2]{To:Towers}
that any automorphism of $F$ commuting with
$\rho$ takes $x$ either to $v x v\inv,$ or
$v x^{-1} v\inv,$ where $v$ is in the fixed-point
subgroup of $\rho,$ that is, in $\str{\cU}.$
It follows that
$$
\sigma(z) = w_0(z;v)=v z v\inv \qquad (z \in \cX),
$$
or
$$
\sigma(z) =w_1(z;v)= v z\inv v\inv \qquad (z \in \cX).
$$
But in the second case, it is not true for
the term $w_1$ that
$$
w_1(xy; v) = v (xy)\inv v\inv = w_1(x;v) \cdot w_1(y;v) =
v x\inv y\inv v\inv
$$
for every $x,y \in \cX.$ Hence $\s$ is an inner
automorphism of $F,$ as claimed.
\end{proof}

\begin{Rem}  \label{Burnside}
\em A theorem by Burnside \cite{Burnside} states that
given a centerless group $G$ such that the group $\inn G$ is a characteristic
subgroup of $\aut G,$ we have that $\aut G$
is complete. It then follows from Corollary \theCor\
that the automorphism group of any infinitely
generated free group is complete
(a result proven in \cite{To:Towers} by a different
method).
\end{Rem}

\section{Relatively free groups $F/R'$}

Recall that a {\it derivation} of a given
group $G$ in a $G$-module (a module over
the group ring $\Z[G]$) $M$ is any map
$D : G \to M$ such that
$$
D(ab) =D(a)+a D(b)
$$
for every $a,b \in G$ (here $a D(b)$
is the result of the action of a scalar $a \in G \sle \Z[G]$
on a vector $D(b) \in M.$)

As it has been proved by Fox \cite{Fox} if $F$ is a free
group with a basis $(X_i : i \in I)$ then
for any prescribed elements $Y_i \in \Z[F]$
there is a unique derivation $D$ of $F$ in $\Z[F]$ such that
$$
D(X_i)=Y_i \qquad (i \in I).
$$
In particular, for every $i \in I$ there is
a derivation $D_i$ of $F$ such that
$$
D_i(X_j) =\delta_{ij} \qquad (i,j \in I).
$$

Now let $R$ be a normal subgroup of $F$
and let $R'$ denote the commutator
subgroup of $R;$ the quotient group $R/R'$
will be denoted by $\avR.$

We shall write $\av{\phantom a}$
for the homomorphism $\Z[F] \to \Z[F/R]$
of group rings induced by the natural group homomorphism
$F \to F/R;$ it is convenient to use
the same symbol $\av{\phantom a}$\ to
denote the homomorphism
$\Z[F/R'] \to \Z[F/R]$ induced
by the natural homomorphism $F/R' \to F/R.$

Clearly, any $F/R$-module can be in a
natural way viewed as an $F$-
and as an $F/R'$-module. Consider a free $F/R$-module
$M$ with free generators $(t_i : i \in I).$
Then it is easy to see that the map
\begin{equation} \label{StDer0G}
\ptl(aR') = \sum \avst{D_i(a)} t_i
\end{equation}
where $a$ runs over $F$ is a well-defined derivation
of $F/R'$ in $M,$ since $\avst{D_i(b)}=0$
for every $b \in R'$ and
for every $i \in I.$

A famous result by Magnus from \cite{Magnus}
is tantamount to the fact that $\ptl : F/R' \to M$ is injective
(see, for instance, \cite{RemSok}). Moreover, the
following properties
\begin{align} \label{Diff0avR}
& \ptl (r_1 r_2) =\ptl(r_1)+ \av r_1 \ptl(r_2)=\ptl(r_1)+\ptl(r_2),\\
& \ptl( g\avR* r ) = \ptl( grg\inv) = \ptl(g) + \av g \ptl(r) - \avst{grg\inv} \ptl(g)
=\av g \ptl(r). \nonumber
\end{align}
are true for all $r_1,r_2 \in \avR=R/R'$ and for all
$g \in F/R'.$ One can therefore state that
$R/R'$ and $\ptl(R/R'),$ viewed as $F/R$-modules,
are isomorphic via $\ptl.$

According to a result by Auslander and Lyndon \cite{AuLyn},
if the quotient group $F/R$ is infinite, the
group $F/R'$ is centerless; we shall use this fact in
Corollary \ref{Dyer!} and Theorem \ref{MR} below.

\begin{Lem} \label{Actn_on_R/Rd}
Let $F$ be an infinitely generated free group, $R$ a fully
invariant subgroup of $F$ such that the group ring
$\Z[F/R]$ has no zero divisors and
all its units are trivial:
$$
U(\Z[F/R]) = \pm F/R.
$$
Suppose that $\s \in \aut{F/R'}$ has
small conjugacy class in $\aut{F/R'}.$ Then the restriction
of $\s$ on the group $\avR=R/R'$ coincides
with the restriction on $\avR$ of
a suitable inner automorphism of $F/R',$ that is,
there is a $v \in F/R'$ such that
$$
\s r=v r v\inv.
$$
for every $r$ in $R/R'.$
\end{Lem}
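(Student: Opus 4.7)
My plan is to translate $\sigma$ via Proposition~\ref{SmNormCls} into a single group word $w$, and then exploit the hypotheses on $\Z[F/R]$ through a Fox-calculus computation in the Magnus module. The goal is to show that the Fox derivative $\av{\partial w/\partial *}$ reduces in $\Z[F/R]$ to a single group element $\av v$, from which the formula $\sigma(r)=vrv\inv$ falls out for $r\in\avR$ essentially by evaluating $\partial$.

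First, Proposition~\ref{SmNormCls} furnishes a tuple $\vec u=(u_1,\ldots,u_s)\in\cX$ and a group word $w(*;*_1,\ldots,*_s)$ with $\sigma(g)=w(g;\vec u)$ for every $g\in F/R'$, and with $w$ homomorphic in its first slot: $w(ab;\vec u)=w(a;\vec u)w(b;\vec u)$ for all $a,b\in F/R'$. In particular $\sigma(r)=w(r;\vec u)$ for every $r\in\avR$, so the lemma reduces to the identity $w(r;\vec u)=vrv\inv$ in $F/R'$ for a suitable $v$. Write $\av A$ and $\av{B_k}$ for the images in $\Z[F_0/R_0]$ of the Fox derivatives $\partial w/\partial *$ and $\partial w/\partial *_k$, where $F_0=F(*,*_1,\ldots,*_s)$ and $R_0$ is the fully invariant subgroup of $F_0$ corresponding to the same variety as $R$.

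The heart of the argument is to recognise $\av A$ as an element of $F/R$. I would apply $\partial$ to the identity $w(XY;\vec u)=w(X;\vec u)w(Y;\vec u)$ with fresh first-slot variables $X,Y$: the $t_X$-coefficient shows that $\av A$ is invariant under $\av X\mapsto\av X\av Y$, and since $\Z[F/R]$ is a domain this forces $\av A$ to involve no first slot, so $\av A=\av A(\av{\vec u})\in\Z[F/R]$; the $t_Y$-coefficient then yields $\av A\av X=\av W(\av X,\av{\vec u})\av A$ in $\Z[F/R]$ for every $\av X$. To identify $\av A$ as a group element I would use that $\sigma\inv$ also lies in the subgroup $S$ of Proposition~\ref{S_is_def}(iv), and so is governed by its own word $w'$ with Fox derivative $\av{A'}$. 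Applying the chain rule to $\sigma\sigma\inv=\id$ at a basis element $x$ outside the parameter sets of both words and comparing $t_x$-coefficients yields $\av A\cdot\av{A'}=1$ in $\Z[F/R]$. So $\av A$ is a unit, and the hypothesis $U(\Z[F/R])=\pm F/R$ gives $\av A=\av v$ for some $\av v\in F/R$ (the sign is forced to be $+$, since otherwise $\sigma$ would fail to be a group homomorphism on the non-abelian subgroup $\avR$).

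To finish, pick any lift $v\in F/R'$ of $\av v$ and apply the Fox chain rule to $\sigma(r)=w(r;\vec u)$ for $r\in\avR$:
\[
\partial(\sigma(r)) \;=\; \av A\cdot\partial(r) + \sum_k \av{B_k}(\av r,\av{\vec u})\,t_{u_k}.
\]
Since $\av r=1$ in $F/R$, the second summand reduces to $\sum_k\av{B_k}(1,\av{\vec u})t_{u_k}$. The key vanishing $\av{B_k}(1,\av{\vec u})=0$ comes from the homomorphic identity itself: setting $b=1$ in $w(ab;\vec u)=w(a;\vec u)w(b;\vec u)$ gives $w(1;\vec *)=1$ in $F_0/R_0'$, so $w(1;\vec *)\in R_0'$, and the Fox derivative of any element of a derived subgroup $R_0'$ vanishes after reduction modulo $R_0$. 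Hence $\partial(\sigma(r))=\av v\cdot\partial(r)=\partial(vrv\inv)$, the last equality using $\av r=1$ in the derivation rule for $\partial(vrv\inv)$. Magnus injectivity of $\partial$ on $F/R'$ then gives $\sigma(r)=vrv\inv$ in $F/R'$, as required. The main obstacle I anticipate is precisely the step identifying $\av A$ as a group element: this exploits the trivial-units hypothesis via the Fox-Jacobian identity $\av A\av{A'}=1$, and without it $\av A$ could be an arbitrary non-zero element of the group ring and the argument collapses.
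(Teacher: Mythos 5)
Your overall strategy is the same Fox-calculus strategy as the paper's: represent $\sigma$ by a word $w$ via Proposition~\ref{SmNormCls}, transport to the Magnus embedding, show that the coefficient $f_\sigma$ governing the action on $\ptl(\avR)$ is a unit of $\Z[F/R]$ (using the small conjugacy class of $\s\inv$), and invoke triviality of units. Your bookkeeping through the abstract Fox derivatives $\av A,\av{B_k}$ rather than through the explicit factorization $w(x;\vec u)=\prod c_i x^{k_i}c_i\inv$ is only a cosmetic variant, and the vanishing $\av{B_k}(1,\av{\vec u})=0$ that you extract from $w(1;\vec u)=1$ is precisely what the paper's rewriting of $w$ achieves. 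Up to that point your argument is sound.

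The gap is in the step where you dispose of the sign. You assert that $f_\sigma=-\av v$ is impossible ``since otherwise $\sigma$ would fail to be a group homomorphism on the non-abelian subgroup $\avR$.'' This is wrong on both counts. First, $\avR=R/R'$ is abelian by definition. Second, $\ptl$ identifies $\avR$ with a $\Z[F/R]$-submodule of a free module, and multiplication by $-\av v$ is a perfectly good additive (hence group-theoretic) automorphism of that submodule; composing with $\ptl\inv$ gives a homomorphism of the abelian group $\avR$, so no contradiction arises from the homomorphism property alone. What actually rules out the minus sign is the \emph{augmentation}: the augmentation of $f_\sigma$ equals the total exponent sum $l$ of the first slot in $w$, and the paper proves $l=1$ by a separate Fox-calculus computation (differentiating $(xy)^l=x^ly^l$ and specializing, using that $\sigma$ is onto to exclude $l=0$). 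Since $-\av v$ has augmentation $-1$, the sign is forced. Your proposal nowhere establishes $l=1$, so without importing that step the sign remains undetermined and the proof does not close.
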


\begin{proof}
Fix a basis $\cB$ of the free group $F$ and let
$\cX$ be the image of $\cB$ under the
natural homomorphism $F \to F/R'.$
By Proposition \ref{SmNormCls}, there are
elements $\stuple u$ of $\cX$ and a term $w(*;*_1,\ldots,*_s)$
of the language of group theory such that
$w$ satisfies \eqref{w_sets_the_image} and \eqref{w_is_hom}
and
$$
\s(z)=w(z;\vec u)
$$
for all $z \in F/R'.$ Suppose that
$$
w(x; \vec u)=v_1 x^{k_1} \ldots v_m x^{k_m}
$$
where elements $v_2,\ldots,v_m$ from
the subgroup $\str{\vec u}$ generated
by the elements $\vec u = u_1,\ldots,u_s$
are nontrivial, $k_1,\ldots,k_{m-1}$
are nonzero integers, while $v_1 \in \str{\vec u}$
and $x^{k_m}$ could be equal to identity.

We show that the sum $l=k_1+\ldots+k_m$
of exponents of $x$ is $1.$ Indeed, by \eqref{w_is_hom}
\begin{equation}
w(xy; \vec u)=w(x;\vec u) w(y;\vec u),
\end{equation}
for all $x,y \in \cX.$ Assume that $x,y$
are distinct members of $\cX.$ Take an endomorphism
of $F/R'$ sending all $u_i$ to $1,$ while
preserving $x$ and $y,$ and apply it to the
both parts of (\theequation):
$$
(xy)^l = x^l y^l.
$$
Let $X$ be the element of $\cB$ whose image is
$x.$ Consider the derivation $D_X$ of $F$ which
takes $X$ to $1$ and takes to $0$ all other
elements of $\cB.$ Let then $\ptl_x$ be the derivation
of $F/R'$ in $\Z[F/R]$ induced by $D_X$:
$$
\ptl_x(aR') = \avst{D_X(a)}\qquad (a \in F).
$$
We have that
$$
\ptl_x( (xy)^l)=\ptl_x(x^l)
$$
Let, for instance, $l > 0.$ Then
\begin{align*}
\ptl_x( (xy)^l ) &= 1+ \av{xy}+\ldots+\av{xy}^{l-1} =\ptl_x(x^l) \\
                 &= 1+ \av{x}+\ldots+\av{x}^{l-1}.
\end{align*}
We apply an endomorphism of the group ring $\Z[F/R]$
induced by the endomorphism of $F/R$ fixing
all elements of $\avst{\cX} \setminus \{\av y\}$ and taking $\av{xy} \to 1$
to the both parts of the last equation:
$$
l = \ptl_x(x^l).
$$
The same is true when $l < 0.$ Thus
$$
\ptl_x(x^l)=l
$$
which means that $l=0,$ or $l=1.$ The
former is clearly impossible, since
$\s$ is an automorphism of $F/R'.$ Hence
$l=k_1+\ldots+k_m=1,$ as claimed.

Observe also that after applying to the both parts of
(\theequation) an endomorphism of $F/R$
taking both $x,y$ to $1$ and fixing
all $u_i,$ we get that
$$
w(1;\vec u)=1.
$$
In particular,
$$
v_1 v_2 \ldots v_m=1,
$$
and then
$$
w(x;\vec u) = \prod_{i=1}^m c_i x^{k_i} c_i\inv,
$$
where
$$
c_i =v_1 \ldots v_i \qquad (i=1,\ldots,m).
$$

Let $\ptl : F/R' \to \Z[F/R]$ be a derivation \eqref{StDer0G}
of $F/R'$ associated with the basis $\cB$ of $F$ we have chosen
above. By \eqref{Diff0avR}, for every $r \in R/R'$ we have that
\begin{align*}
\ptl( \s(r) ) &= \ptl( \prod_{i=1}^m c_i r^{k_i} c_i\inv)
              =\sum_{i=1}^m k_i \av c_i\, \ptl(r) \\
	      &=(\sum k_i \av c_i) \ptl(r).
\end{align*}
Let us denote the element $\sum k_i \av c_i \in \Z[F/R]$
by $f_\s.$ As the conjugacy class of the
inverse $\s\inv$ of $\s$ is also small,
the same argument applies to $\s\inv$: there
is an element $f_{\s\inv}$ of $\Z[F/R]$ with
$$
\ptl( \s\inv(r)) = f_{\s\inv} \ptl(r) \qquad [r \in R/R'].
$$
The proof of Corollary 1 in \cite{Dyer} demonstrates
that $R/R'$ is fully invariant subgroup of $F/R',$
provided that the group ring $\Z[F/R]$
has no zero divisors. Then $\s\inv(r) \in R/R'$
for every $r \in R/R'$ and hence
$$
\ptl(r) =\ptl( \s(\s\inv(r) ) )=
f_\s \ptl(\s\inv(r))=f_\s f_{\s\inv} \ptl(r),
$$
or
$$
(1-f_\s f_{\s\inv})\ptl(r)=0.
$$
As $\Z[F/R]$ has no divisors of zero,
$$
1 =f_\s f_{\s\inv}=f_{\s\inv} f_\s
$$
and as $\Z[F/R]$ has only trivial
units,
$$
f_\s = \av v, \text{ or } f_\s=-\av v
$$
for some $v \in F/R',$ whence
$$
\ptl(\s(r) ) = \av v\, \ptl(r), \text{ or }
\ptl(\s(r) ) = -\av v\, \ptl(r)
$$
for all $r \in R/R'.$ In the
first case we are done:
$$
\ptl(\s(r) ) =\ptl( v rv\inv)
$$
and $\s(r)=vrv\inv,$ since $\ptl$ is injective.
In the second case
$$
k_1 \av c_1+\ldots+k_m \av c_m = -\av v,
$$
which is impossible, since the vector
in the left-hand side has augmentation
$k_1+\ldots+k_m=1,$ whereas the vector
in the right-hand side has augmentation
$-1.$
\end{proof}

Generalizing an earlier
result by Shmel'kin \cite{Shmel'kin2} on
free solvable groups, Dyer \cite{Dyer} proved the following
result: if $F$ is a free group and a normal
subgroup $R$ is such that the quotient
group $F/R$ is torsion-free and either
is solvable, or has nontrivial center
and is not cyclic-by-periodic, then
any automorphism of the group $F/R'$
which fixes $R/R'$ pointwise is
an inner automorphism of $F/R'$ determined
by an element of $R/R'.$ We have therefore
the following corollary of Lemma \theLem.

\begin{Cor} \label{Dyer!}
Let $F$ be an infinitely generated
free group and $R$ a fully invariant
subgroup of $F$ such that the quotient
group $F/R$ satisfies the conditions
of Dyer's theorem and all units
of the group ring $\Z[F/R]$ are trivial.
Then the group $\aut{F/R'}$ is complete.
In particular, the automorphism group
of any infinitely generated free solvable
group of derived length $\ge 2$ is complete.
\end{Cor}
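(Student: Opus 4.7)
The plan is to deduce the corollary from Burnside's theorem quoted in Remark \ref{Burnside}: it suffices to check that $F/R'$ is centerless and that $\inn{F/R'}$ is a characteristic subgroup of $\Gamma=\aut{F/R'}$. The hypotheses on $F/R$ from Dyer's theorem force $F/R$ to be infinite (torsion-free and noncyclic in all cases considered), so the Auslander--Lyndon theorem immediately gives that $F/R'$ is centerless, taking care of the first requirement.

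For the second requirement, I would identify $\inn{F/R'}$ with the subgroup $S$ of elements of $\Gamma$ having a small conjugacy class. Since $S$ is characteristic in $\Gamma$ by Proposition \ref{S_is_def}(iv), this identification finishes the proof. The inclusion $\inn{F/R'}\le S$ is essentially free: the map $g\mapsto \tau_g$ has image of cardinality at most $|F/R'|=\vk$, and the conjugacy class of $\tau_g$ is contained in $\{\tau_{\alpha(g)}:\alpha\in\Gamma\}$, hence has cardinality $\le\vk$.

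The main work is the reverse inclusion $S\le\inn{F/R'}$, and here I would combine the two deep inputs already assembled in the paper. Given $\s\in S$, Lemma \ref{Actn_on_R/Rd} provides $v\in F/R'$ with $\s(r)=vrv\inv$ for all $r\in R/R'$; note that the hypotheses of the lemma are satisfied, since the corollary assumes $\Z[F/R]$ has only trivial units (and in the applications of interest the absence of zero-divisors holds as well, e.g.\ by Farkas--Snider / Kropholler--Linnell--Moody for the torsion-free solvable case covered by Dyer's theorem). The composition $\s':=\tau_v\inv\s$ then lies in $\Gamma$ and fixes $R/R'$ pointwise. Dyer's theorem now applies directly: $\s'$ must be the inner automorphism of $F/R'$ determined by some element $r_0\in R/R'$, so $\s=\tau_v\tau_{r_0}=\tau_{vr_0}$ is inner, as required.

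The principal obstacle is simply checking that the ring-theoretic hypotheses in the Lemma are available in all cases covered by Dyer's theorem, i.e.\ verifying that $\Z[F/R]$ has no zero divisors whenever it has trivial units and $F/R$ falls in one of Dyer's classes; once this bookkeeping is done the specialisation to infinitely generated free solvable groups of derived length $\ge 2$ is immediate by taking $R=F^{(n-1)}$ with $n\ge 2$, since $F/R$ is then torsion-free solvable, and the classical results on group rings of torsion-free solvable groups supply both the trivial-units and no-zero-divisors conditions needed to invoke the general statement.
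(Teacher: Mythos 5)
Your argument follows the paper's own proof almost step for step: use Burnside's criterion, identify $\inn{F/R'}$ with the characteristic subgroup $S$ of automorphisms with small conjugacy class, and get the nontrivial inclusion $S\le\inn{F/R'}$ by combining Lemma~\ref{Actn_on_R/Rd} with Dyer's theorem. Two remarks on the ``bookkeeping'' you flag as the principal obstacle. First, for the no-zero-divisors hypothesis you do not need Farkas--Snider or Kropholler--Linnell--Moody: since $F/R$ is torsion-free under the hypotheses of Dyer's theorem, triviality of units of $\Z[F/R]$ (which is an explicit hypothesis of the corollary) already implies $\Z[F/R]$ has no zero divisors, by the standard chain of implications among the Kaplansky conjectures (see Lam, \S 6); this disposes of all of Dyer's cases at once, not just the solvable one. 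Second, for the ``in particular'' clause your appeal to ``classical results on group rings of torsion-free solvable groups'' to supply \emph{trivial units} is imprecise --- trivial units is not known (indeed is false, by Gardam's recent example) for torsion-free groups in general, and it is not a classical theorem for arbitrary torsion-free solvable groups either. What the paper uses, and what you should use, is that \emph{free} solvable (more generally free polynilpotent) groups are orderable, by Shmel'kin, and that the group ring of an orderable group has only trivial units; no-zero-divisors then again comes for free from the torsion-free observation above. With these two corrections your proof matches the paper's.
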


\begin{proof}
First, observe that as $F/R$ must be
torsion-free by the conditions, triviality
of units of $\Z[F/R]$ implies that $\Z[F/R]$
has no zero divisors (see, for instance,
\cite[\S\ 6]{Lam}.)

By Proposition \ref{S_is_def} (iv), the subgroup
$S$ of elements of $\aut{F/R'}$
whose conjugacy class is small is a characteristic
subgroup of $\aut{F/R'}.$ By Lemma \ref{Actn_on_R/Rd}
and by the quoted result by Dyer from \cite{Dyer},
$S$ equals $\inn{F/R'}.$ For any automorphism $\s$ of $F/R'$
whose conjugacy class is small is inner: the restriction of $\s$
on $R/R'$ coincides with the restriction on $R/R'$ of
a suitable inner automorphism $\tau_v$ of $F/R';$
then $\tau_{v\inv} \s$ fixes $R/R'$ pointwise,
and $\tau_{v\inv} \s =\tau_r$ for some $r \in R/R'.$
Hence $\inn{F/R'}=S$ is a characteristic
subgroup of $\aut{F/R'},$ and then the group $\aut{F/R'}$
is complete (by Burnside's theorem quoted
in Remark \ref{Burnside}).

Recall that free polynilpotent
groups (in particular, free solvable groups)
are orderable \cite{Shmel'kin}. Also,
the group ring $\Z[G]$ of an orderable
group $G$ has only trivial units \cite[\S\ 6]{Lam}.
Thus the conditions of the corollary
are met by any infinitely generated
free solvalbe group $F/F^{(k)}$ of derived length $k \ge 2,$
and hence the automorphism group $\aut {F/F^{(k)}}$
of $F/F^{(k)}$ is complete.
\end{proof}

\section{Residually torsion-free nilpotent relatively
free groups $F/R'$}

Till the end of this section $F$ will denote an infinitely
generated free group, $R$ a fully invariant subgroup of $R,$
and $G$ the quotient group $F/R'.$ We shall assume throughout
the section that the quotient group $F/R$ is residually
torsion-free nilpotent. 

Recall that if $\cP$ is a property of groups, a group $H$
is said to be {\it residually $\cP$}, if for every
nonidentity element $h$ of $H,$ there is a surjective
homomorphism from $H$ onto a group with $\cP$
such that the image of $h$ under this homomorphism
is not trivial.

By a quite standard argument, every
residually orderable group is orderable. As
any torsion-free nilpotent group is orderable,
we obtain that the group $F/R$ is orderable,
and according to the remarks we have made
at the end of the previous section the
group ring $\Z[F/R]$ is a domain
whose units are trivial. So Lemma
\ref{Actn_on_R/Rd} applies to $G=F/R'.$

As usual $\gamma_k(G)$ where $k \in \N$ denotes the $k$-th term
of the lower central series of $G$ ($\gamma_1(G)=G$
and $\gamma_{k+1}(G)=[G,\gamma_k(G)]$ for all natural
numbers $k \ge 1).$ As in \cite{DFo} we define
the series $(\av\gamma_k(G): k \ge 1)$ where
$$
\av\gamma_k(G) = \{g \in G : g^m \in \gamma_k(G) \text{ for some integer $m \ne 0$}\}.
$$
Clearly, $G/\av\gamma_k(G)$ is a torsion-free nilpotent
group of class at most $k-1.$ According to a result
by Hartley \cite[Theorem D2]{Hartley}, if $F/R$
is residually torsion-free nilpotent, so is
$F/R'.$ Thus $G$ is residually torsion-free nilpotent,
and hence
$$
\bigcap_{k \ge 1} \av \gamma_k(G) = \{1\}.
$$

For formality's sake, we shall say that a relation $X$
on a group $H$ is {\it definable} in $H,$ if
$X$ admits a description in $H$ in terms of group operation.
For instance, $X$ is definable in $H$ if $X$ is the
set of realizations in $H$ of a suitable
formula of some logic. Any definable relation on a given
group $H$ is invariant under all automorphisms of
$H.$

Working with a subgroup $H$ of $G,$ we shall denote by $I_H$ the group
$\{\tau_h : h \in H\}$ of all inner automorphisms of $G$ determined by members of $H.$

\begin{Lem} \label{SomeDefReslts}
Let $S$ be the subgroup of all automorphisms
of $G$ whose conjugacy class is small. Then

{\rm (i)} $S = \inn G \cdot S_{(\avR)}$
where $S_{(\avR)}$
is the subgroup of all elements of $S$ fixing $\avR$ pointwise;

{\rm (ii)} the subgroup $S_{(\avR)}$
is the Hirsch-Plotkin radical {\rm(}the
maximal locally nilpotent subgroup{\rm)} of the group $S;$

{\rm (iii)} if $R \le F',$ then $S\,'$ coincides
with the subgroup $I_{G'}$ of all inner
automorphisms of $G$ determined by elements
of $G'.$ In particular,
$$
I_{\avR} = S_{(\avR)} \cap S\,'.
$$

{\rm (iv)} elements of the form $\tau_x \gamma$
where $x$ is an element of $G$ whose image under the natural
homomorphism $G=F/R' \to F/R$ is a primitive
element of the group $F/R$ and $\gamma \in S_{(\avR)}$
form a definable family of the group $\aut G.$
\end{Lem}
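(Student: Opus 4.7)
The overall strategy is to extract each of the four parts from a combination of Proposition \ref{SmNormCls}, Lemma \ref{Actn_on_R/Rd}, and the residual torsion-free nilpotence of $F/R$. For part (i) the proof is short: given $\sigma \in S$, Lemma \ref{Actn_on_R/Rd} produces $v \in G$ with $\sigma r = v r v\inv$ for all $r \in \avR$, so $\tau_{v\inv}\s$ lies in $S_{(\avR)}$, using that $\inn G \le S$ (inner automorphisms have conjugacy class of size exactly $\vk$ as $G$ is centerless by Auslander--Lyndon) and that $S$ is a subgroup by Proposition \ref{S_is_def}(iv). The reverse inclusion $\inn G \cdot S_{(\avR)} \sle S$ is immediate.

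For part (ii) I would argue in three stages: normality, local nilpotency, maximality. Normality of $S_{(\avR)}$ in $\Gamma$ follows because $\avR$ is a fully invariant subgroup of $G$ (observed in the proof of Lemma \ref{Actn_on_R/Rd} via Dyer's theorem): for $\gamma \in S$ and $\s \in S_{(\avR)}$, if $r \in \avR$ then $\gamma\inv r \in \avR$, so $\gamma\s\gamma\inv(r)=r$. For local nilpotency, the key observation is that every $\s \in S$ acts trivially on $G/G'$: the word decomposition $\s(x) = \prod c_i x^{k_i} c_i\inv$ with $\sum k_i = 1$ from the proof of Lemma \ref{Actn_on_R/Rd} shows $\s(x)\equiv x \Mod{G'}$. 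Combined with $\s|_{\avR} = \id$, every $\s \in S_{(\avR)}$ stabilizes the descending central-type filtration obtained by pulling back $\av\gamma_k(F/R)$ to $G$; by the Kaloujnine--P.\ Hall stability theorem finitely many such $\s$ generate a nilpotent subgroup. Maximality reduces to showing that any $\s \in S$ with $\s|_{\avR} \ne \id$ acts on $\avR$ by a nontrivial element $\av v \in \Z[F/R]$; together with a suitably chosen inner automorphism $\tau_g$ this produces a free abelian action of infinite rank on $\avR$, precluding $\s$ from lying in any locally nilpotent normal subgroup.

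For part (iii), I would combine (i) and (ii): since $S = I_G \cdot S_{(\avR)}$ and $I_G \cong G$ is centerless, $[I_G,I_G] = I_{G'}$. Under the hypothesis $R \le F'$ we have $\avR \le G'$, so $I_{\avR} \sle I_{G'} \sle S'$. Commutators of the form $[\tau_g,\gamma]$ with $\gamma \in S_{(\avR)}$ land in $I_{G'}$, because writing $\gamma(g) = g \cdot c(g)$ with $c(g) \in \avR \sle G'$ gives $[\tau_g,\gamma] = \tau_{c(g)\inv} \in I_{G'}$. The equality $I_{\avR} = S_{(\avR)} \cap S\,'$ then follows: $I_{\avR} \sle S_{(\avR)}$ since $\avR$ is abelian, and the reverse inclusion comes by intersecting $S\,'=I_{G'}$ with $S_{(\avR)}$, noting that an inner $\tau_g$ fixing $\avR$ pointwise forces $g$ to centralize $\avR$, and Dyer's analysis of $R/R'$ in the proof of Lemma \ref{Actn_on_R/Rd} identifies the relevant conjugators with elements of $\avR$ itself.

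For part (iv), the definability is obtained by combining definable ingredients. By (ii) the subgroup $S_{(\avR)}$ is the Hirsch--Plotkin radical of $S$, which is itself characteristic in $\aut G$ by Proposition \ref{S_is_def}(iv); both are therefore definable in $\aut G$. Inner automorphisms $\tau_x$ by elements whose image in $F/R$ is primitive are cut out by a first-order condition on the conjugacy action of $\tau_x$, using that primitivity in a residually torsion-free nilpotent relatively free group is witnessed by being part of a basis of the abelianization and lifting back through the residual nilpotent series. The product set $\{\tau_x\gamma\}$ is then definable as $\inn G$-translates of $S_{(\avR)}$ restricted to the definable index set. \emph{Main obstacle:} part (ii), and specifically proving both local nilpotency (which requires carefully extracting a usable filtration from the residually torsion-free nilpotent structure on $F/R$ and verifying the stability hypothesis with the explicit word form) and maximality (which needs an explicit free action argument). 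A secondary subtlety is the first-order characterization of primitive elements of $F/R$ needed in (iv).
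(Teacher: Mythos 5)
Your part (i) matches the paper exactly, and parts (iii) and (iv) are aimed in roughly the right direction, but there is a genuine gap in part (ii) and a questionable step in part (iv).

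For part (ii), the paper's argument is considerably simpler than a stability-theorem approach, and the simplification you are missing is the crux. By Corollary~2 of Dyer's paper cited in the proof of Lemma~\ref{Actn_on_R/Rd}, any $\pi$ fixing $\avR$ pointwise satisfies $\pi(x) = x r_x$ with $r_x \in \avR$ for every $x \in G$. Because $\avR$ is \emph{abelian}, two such automorphisms commute: if $\pi_1(x) = x r_x$ and $\pi_2(x) = x s_x$ then $\pi_1\pi_2(x) = x r_x s_x = x s_x r_x = \pi_2\pi_1(x)$, using that each $\pi_i$ fixes $\avR$ pointwise. So $\aut G_{(\avR)}$ is already abelian — no filtration, no Kaloujnine--Hall, no verification of a stability hypothesis. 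Local nilpotency is then trivial. Your proposed filtration route would in fact need work: it is not clear that an element of $S_{(\avR)}$ stabilizes the pulled-back $\av\gamma_k$ filtration with trivial action on successive quotients, which is what the stability theorem requires. For maximality, the decisive ingredient is equation~\eqref{NoChance} from Dyer--Formanek: for $g \notin \avR$ and $r \in \avR\setminus\{1\}$, the iterated commutator $[g,g,\ldots,g,r]$ is never trivial, so $[\tau_g\gamma,\ldots,\tau_g\gamma,\tau_r] = \tau_{[g,\ldots,g,r]} \ne \id$, and no locally nilpotent subgroup of $S$ can properly contain $S_{(\avR)}$. Your ``free abelian action of infinite rank'' sketch does not reach this conclusion and would need to be replaced.

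In part (iii) you only compute commutators of the special form $[\tau_g,\gamma]$, but you need to handle general commutators $[\tau_a\gamma,\tau_b\delta]$ with $\gamma,\delta\in S_{(\avR)}$; this works out cleanly once you know $S_{(\avR)}$ is abelian, as the paper does. For the ``in particular'' clause, you should cite \eqref{NoChance} again to see that the centralizer of $\avR$ in $G$ is $\avR$ itself. In part (iv), the claim that primitivity in $F/R$ is detected via the abelianization (``lifting back through the residual nilpotent series'') is not justified and is not the argument the paper uses; the paper simply observes that $S/S_{(\avR)}$ is a relatively free group isomorphic to $F/R$, and that the set of primitive elements of any relatively free group is definable by the condition of belonging to a basis (a subset all of whose maps into the group extend to endomorphisms). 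You should use that direct characterization rather than routing through the abelianization.
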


\begin{proof}
(i) Observe that by Lemma \ref{Actn_on_R/Rd},
every element $\s$ of $S$ can be written in the
form
$$
\tau_v (\tau_v\inv \s) =\tau_v \gamma
$$
where the automorphism $\gamma=\tau_v\inv \s$ fixes $\avR$
pointwise and its conjugacy class is small.

(ii) We base our argument on the fact that $\avR=R/R'$
is the Hirsch-Plotkin radical of the group
$G=F/R'$ \cite{DFo}.

According to Corollary 2 in \cite{Dyer},
if an automorphism $\pi$ belongs
to the subgroup $\aut G_{(\avR)},$
that is, if it fixes $\avR$ pointwise,
then
$$
x\inv \pi x \in \avR
$$
for all $x \in G.$ It follows that
\begin{equation} \label{Dyer'sEq}
\pi x = x r_x
\end{equation}
where $r_x \in \avR$ for all $x \in G,$ and hence
the group $\aut G_{(\avR)}$ is abelian. As
$\avR$ is a characteristic subgroup of $G,$
the group $\aut G_{(\avR)}$ is a normal
subgroup of $\aut G.$ It follows that
$S_{(\avR)}$ is a normal abelian subgroup
of $\aut G.$

Let $\tau_g \gamma$ where $\gamma \in S_{(\avR)}$
be an element of $S$ which is not in $S_{(\avR)}.$
In particular, $g \in G\setminus \avR.$ Let $r$ be a
nonidentity element of $\avR;$ clearly,
$\tau_r \in S_{(\avR)}.$ As it is shown
in \cite{DFo},
\begin{equation} \label{NoChance}
[g,g,\ldots,g,r] \ne 1
\end{equation}
(see the proof of Theorem 3.5 in \cite{DFo}). But then
$$
[\tau_g \gamma,\tau_g \gamma ,\ldots,\tau_g \gamma,\tau_r] =
\tau_{[g,g,\ldots,g,r]} \ne \id
$$
in $\aut G.$ Hence there is no locally nilpotent subgroup
of $S$ properly containing $S_{(\avR)}.$

(ii) Recall that $I_{G'}$ denotes the group
of inner automorphisms of $G$ determined by
elements of $G'.$ Clearly, $I_{G'} \le S\,',$
since $\inn G \le S.$ Consider a commutator
of elements of $S$:
$$
\rho=\tau_a \gamma \tau_b \delta \gamma\inv \tau_a\inv \delta\inv \tau_b\inv
$$
where $\gamma,\delta \in S_{(\avR)}.$ Then
$$
\rho = \tau_a \tau_{\gamma(b)} \tau_{\delta(a\inv)} \tau_{b\inv}
$$
By \eqref{Dyer'sEq} there exist $r_b,s_a \in \avR$ such that
$\gamma(b)=b r_b$ and $\delta(a)=a s_a.$ Then $\rho$
is the inner automorphism determined by the element
$$
a b r_b s_a\inv a\inv b\inv
$$
of $G' \avR=G',$ since $R \le F'.$

(iv) By (i), $S=\inn G \cdot S_{(\avR)}$ and then
\begin{align*}
S/S_{(\avR)} &= \inn G \cdot S_{(\avR)}/S_{(\avR)}
              \cong \inn G/(\inn G \cap S_{(\avR)}) \\
              &\cong I_G/I_{\avR} \cong G/\avR \cong F/R.
\end{align*}
Thus $S/S_{(\avR)}$ is a relatively free group
isomorphic to the group $F/R.$ As the family
of all primitive elements a given relatively
free group $H$ is definable in $H,$
the result follows.
To explain in terms of group operation
that an element $z$ of $H$ is primitive, one explains
that $z$ can be included into some basis
of $H;$ a basis $X$ of $H$ being a subset of
$H$ such that any map from $X$ into $H$
can be extended to a homomorphism from
$H$ into $H$.
\end{proof}

\begin{Prop} \label{PrimConjs:Lifting}
Let $R \le F'.$ Suppose that the following conditions
are true for an automorphism $\s \in \aut G$:

{\rm (a)} the conjugacy class of $\s$ is small;

{\rm (b)} the image of $\s$ under
the natural homomorphism $S \to S/S_{(\avR)}$
is a primitive element of the relatively
free group $S/S_{(\avR)} \cong F/R;$

{\rm (c)} the group $L(\s)=\nc(\s) I_{G'}$ contains
no element of $S_{(\avR)} \setminus I_{\avR}.$

\noindent It follows that $\s$ is an inner automorphism of $G,$
that $\nc(\s) I_{G'}=\inn G,$ and that $\inn G$ is a characteristic
subgroup of $\aut G.$
\end{Prop}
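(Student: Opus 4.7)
The plan is to use Lemma \ref{SomeDefReslts} to decompose $\s=\tau_v\gamma$, leverage condition (c) to force $\gamma\in I_{\avR}$, and finally deduce the remaining assertions from the definability of (a)--(c).

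I would begin by invoking Lemma \ref{SomeDefReslts} (i) to write $\s=\tau_v\gamma$ with $\gamma\in S_{(\avR)}$; condition (b) makes the image of $v$ in $F/R$ a primitive element. Next I would establish the internal direct product decomposition
\[
S/I_{G'} \;=\; (I_G/I_{G'}) \,\times\, (S_{(\avR)}\cdot I_{G'}/I_{G'})
\]
of abelian groups, by collecting: the equality $S=I_G\cdot S_{(\avR)}$ of Lemma \ref{SomeDefReslts} (i); the identification $S'=I_{G'}$ of part (iii); the identity $I_G\cap S_{(\avR)}=I_{\avR}$ (which reduces to $C_G(\avR)=\avR$ and hence to the faithfulness of $\avR$ as $F/R$-module, supplied by the Magnus embedding $\ptl:\avR\hookrightarrow M$ recalled at the beginning of this section); and a direct calculation showing $[I_G,S_{(\avR)}]\subseteq I_{\avR}$. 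Since $R'\le F'$, the first factor identifies with $G^{ab}=F^{ab}$, and $\s\cdot I_{G'}$ acquires coordinates $(\overline v,\overline\gamma)$ in which $\overline v$ is a primitive of the free abelian group $F^{ab}$.

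Because $S/I_{G'}$ is abelian, $L(\s)/I_{G'}$ is the subgroup generated by the $\aut G$-orbit of $(\overline v,\overline\gamma)$. Condition (c) then says that this subgroup intersects $\{0\}\times S_{(\avR)}/I_{\avR}$ only at $0$; equivalently, every $\Z$-linear relation $\sum n_i\pi_i(\overline v)=0$ in $F^{ab}$ is accompanied by $\sum n_i\pi_i(\overline\gamma)=0$ in $S_{(\avR)}/I_{\avR}$. Since the $\aut G$-orbit of $\overline v$ (the set of all primitives of $F^{ab}$) generates $F^{ab}$, this produces a $\Z[\aut G]$-equivariant map $\phi:F^{ab}\to S_{(\avR)}/I_{\avR}$ with $\phi(\overline v)=\overline\gamma$; in particular $\overline\gamma$ is fixed by the stabilizer of $\overline v$ in $\aut G$. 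To force $\overline\gamma=0$ I would combine the term-representation $\gamma(x)=W(x;\vec u)$ from Proposition \ref{SmNormCls} with a transvection $T\in\aut G$ fixing $v$ and all of $\vec u$ but sending some $y\in\cX\setminus(\{v\}\cup\vec u)$ to $yv$ (available via Proposition \ref{MStab-in-a-small-index-subgroup} (iii)); since $T$ fixes $\overline v$, the equivariance of $\phi$ forces $T\gamma T\inv$ to be cohomologous to $\gamma$ modulo inner derivations from $\avR$. Pushing this cohomology identity through the Magnus--Fox embedding into an equation in the domain $\Z[F/R]$ (using the triviality of units and the primitivity of $\overline v$) collapses it to the statement that the derivation $x\mapsto r_x$ of $\gamma$ is inner. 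Hence $\gamma=\tau_r$ for some $r\in\avR$, so $\s=\tau_{vr}$ is an inner automorphism.

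Having $\s$ inner, $\nc(\s)\subseteq\inn G$; the $\aut G$-orbit of $\tau_{vr}$ projects onto all primitives of $F/R=G/\avR$, which generate $F/R$, so $\nc(\s)\cdot I_{\avR}=I_G$, and then $I_{\avR}\subseteq I_{G'}$ (valid since $R\le F'$ implies $\avR\subseteq G'$) gives $L(\s)=\nc(\s)I_{G'}=I_G=\inn G$. Finally, conditions (a), (b), (c) are each definable in $\aut G$ via the characteristic subgroups $S$, $S_{(\avR)}$, $I_{G'}$, $I_{\avR}=S_{(\avR)}\cap S'$ together with the definable family of Lemma \ref{SomeDefReslts} (iv); hence the set $\Sigma_0$ of $\s\in\aut G$ satisfying (a)--(c) is preserved by every $\Phi\in\aut(\aut G)$. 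Picking $\s\in\Sigma_0$ (nonempty: take $\s=\tau_x$ for a basis element $x$ of $G$ whose image is primitive in $F/R$), we have $L(\s)=\inn G$; and the formation of $L$ commutes with $\Phi$ (since $\Phi\nc(\s)=\nc(\Phi\s)$ and $\Phi$ fixes $I_{G'}$), giving $\Phi(\inn G)=\Phi(L(\s))=L(\Phi\s)=\inn G$. Thus $\inn G$ is characteristic.

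The main obstacle is the cohomological vanishing in the third paragraph: translating the stabilizer-invariance of $\overline\gamma$ granted by (c) into its actual vanishing in $S_{(\avR)}/I_{\avR}\cong H^1(G,\avR)$. This requires the Magnus--Fox machinery recalled at the beginning of the section to extract an effective triviality equation from the abstract invariance.
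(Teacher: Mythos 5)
The overall architecture of your outline is right — decompose $\s=\tau_v\gamma$ via Lemma \ref{SomeDefReslts}(i), use (c) to derive a congruence $\gamma^\pi \equiv \gamma \Mod{I_{\avR}}$ for $\pi$ in the stabilizer of the primitive, and then force $\gamma\in I_{\avR}$ — and the direct-product framing of $S/I_{G'}$ with the resulting $\Z[\aut G]$-equivariant map $\phi:F^{\mathrm{ab}}\to S_{(\avR)}/I_{\avR}$ is a clean repackaging of the paper's implication \eqref{CongModIR}. The closing definability/characteristicity argument matches the paper and is correct (modulo a slip: you need $\nc(\s)I_{G'}=I_G$, not $\nc(\s)I_{\avR}=I_G$, though the former follows from the latter since $I_{\avR}\le I_{G'}$).

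The genuine gap is exactly where you flag it, and your proposed way around it does not work. You suggest that pushing the stabilizer-invariance of $\overline\gamma$ "through the Magnus--Fox embedding into an equation in the domain $\Z[F/R]$ (using the triviality of units and the primitivity of $\overline v$) collapses it to the statement that the derivation $x\mapsto r_x$ of $\gamma$ is inner." That is too optimistic. Triviality of units of $\Z[F/R]$ has already been spent, in Lemma \ref{Actn_on_R/Rd}, to produce the decomposition $\s=\tau_v\gamma$ in the first place; it gives no further purchase here because $\gamma$ restricted to $\avR$ is the identity, so the Magnus--Fox image of $\gamma$ carries no information at that level. What the paper actually does is (1) split into two cases according to whether the primitive $x$ occurs in the parameter tuple $\vec u$ of Proposition \ref{SmNormCls}; (2) in the hard case reduce $\gamma$ to $\delta$ with $\delta(t)=tw(t;x)$; and (3) kill $\delta$ by a three-claim induction: Claim \ref{SecComp} establishes $w(t;x^k)=w(t;x)^k$; Claim \ref{WeAre_in_gamma3} uses the Fox derivative to force $w(t;x)\in\gamma_3(G)$ but goes no further; and Claim 3.5 then runs a lower-central-series induction via basis commutators and concludes $w(t;x)\in\bigcap_k\av\gamma_k(G)=\{1\}$. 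That last equality is the residual torsion-free nilpotence of $G$, which is the whole point of the standing hypothesis of the section and appears nowhere in your outline. Without it the vanishing simply fails: the Fox-derivative computation by itself cannot get past $\gamma_3(G)$. You also cannot avoid the case analysis, since the permutational conjugation you want (fixing $x$, moving $\vec u$ off itself) does not exist when $x\in\vec u$, and the transvection $T$ you invoke on a disjoint $y$ is not the one used in the paper (the relevant transvection there acts on the primitive $x$ itself). So the "cohomological vanishing" is not a technical detail to be absorbed — it is the substantive content of the proof, and your outline as written would not establish it.
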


\begin{proof}
Let
\begin{equation}
\s = \tau_x \gamma
\end{equation}
where $\gamma \in S_{(\avR)}.$
Suppose, towards a contradiction, that $\s$ is not
an inner automorphism of $G.$ This implies that
$\gamma \in S_{(\avR)} \setminus I_{\avR}.$
As under the natural homomorphisms $F/R' \to F/R$
and $F/R \to F/F'$ the element $x$ goes to a primitive element
of $F/F',$ there is a $c \in G'$ such
that $c x$ is a primitive element of $G.$

The group $L(\s)$ contains then the element
$$
\tau_c \tau_x \gamma = \tau_{cx} \gamma.
$$
This enables us to assume without loss of generality
that $x$ in (\theequation) is already a primitive element of $G.$

Since (c) is satisfied by $\s,$ whenever
elements of the form $\tau_a \gamma_1$ and
$\tau_a \gamma_2$ where $a \in G$
and $\gamma_1,\gamma_2 \in S_{(\avR)}$ both
belong to $L(\s),$ the elements $\gamma_1,\gamma_2$
must be congruent modulo $I_{\avR}$:
\begin{equation} \label{CongModIR}
\tau_a\gamma_1, \tau_a\gamma_2 \in L(\s) \To \gamma_1 \equiv \gamma_2 \Mod{I_{\avR}};
\end{equation}
otherwise $\gamma_2\inv \tau_a\inv \cdot \tau_a \gamma_1 \in
S_{(\avR)} \setminus I_{\avR},$ contradicting (c).

In particular, for any automorphism $\pi$ of $G$ stabilizing
our primitive element $x,$ we have by (\theequation) that
$$
\gamma^\pi \equiv \gamma \Mod{I_{\avR}}.
$$
where $\gamma^\pi = \pi \gamma \pi\inv.$

Fix a basis $\cX$ of $G$ containing $x.$ By Proposition
\ref{SmNormCls} and by \eqref{Dyer'sEq}
$$
\gamma(t) =t v(t; u_1,\ldots,u_k) \qquad (t \in \cX)
$$
where $\vec u = u_1,\ldots,u_k$ are some (fixed)
members of $\cX,$ $v(*;*_1,\ldots,*_k)$ is a term/word of the language
of group theory such
$$
zt v(zt;\vec u) = zv(z;\vec u) \cdot tv(t;\vec u)
$$
for all $z,t\in \cX$ and
$$
v(t;\vec u) \in \avR
$$
for all $t \in \cX.$

1) {\it Suppose first that $x$ does not belong to
the tuple $\vec u.$} Consider then an automorphism
$\pi$ of $G$ acting on $\cX$ as a permutation
which fixes $x$ and takes $\vec u$ to a tuple
$\pi \vec u$ having no common element with $\vec u$:
$
\pi \vec u \cap \vec u =\varnothing.
$
By \eqref{CongModIR}, there is an $s \in \avR$ such that
$
\gamma^\pi = \tau_s \gamma.
$
This implies that
$$
z v(z; \pi \vec u) = szs\inv v(z;\vec u)
$$
for all $z \in \cX.$ Since $\cX$ is infinite,
there is a $t \in \cX$ such that the letter
$t$ does not appear in the word $s,$ nor
$t \in \vec u,$ nor $t \in \pi \vec u.$ We then
apply an endomorphism of $G$ taking all elements
$\pi \vec u$ to $1$ and fixing all other elements
of $\cX$ to the both parts of the equation
$$
t v(t; \pi \vec u) = sts\inv v(t;\vec u),
$$
thereby getting that
$$
t = s_0 t s_0\inv v(t;\vec u),
$$
or
$$
s_0\inv t s_0 = t v(t; \vec u).
$$
It follows that
$$
s_0\inv z s_0 = z v(z; \vec u).
$$
for every $z \in \cX,$ and then $\gamma$
is an inner automorphism determined
by an element $s_0\inv \in \avR,$ a contradiction.

2) {\it Suppose now that $x$ is a member of $\vec u$} and $\vec u = x, u_2, \ldots, u_k.$
Write $\vec u_0$ for the tuple $u_2,\ldots,u_k.$
As above, we consider an automorphism $\pi$ of $G$
acting on $\cX$ as a permutation, fixing $x$
and such that tuples $\vec u_0$ and $\pi \vec u_0$
are disjoint. Then
$$
z v(z; x, \pi \vec u_0) = szs\inv v(z;x,\vec u_0)
$$
for all $z \in \cX.$ Working
with the endomorphism of $G$ taking all elements
$\pi \vec u_0$ to $1$ and fixing pointwise
$\cX \setminus \{\pi\vec u_0\},$ we see that
$$
t w(t; x) = s_0 t s_0\inv v(t; x, \vec u_0),
$$
or
\begin{equation}
s_0\inv t s_0 w(t; x) = t v(t; x, \vec u_0),
\end{equation}
where $w(*;*_1)$ is a fixed group word/term of the language
of groups, for all $t \in \cX$ (at first for $t \in \cX$ that are
not members of $\vec u_0,$ $\pi u_0,$ and the
set of letters of $\cX$ forming $s,$ then
for all $t \in \cX$.)

The equation (\theequation) means that
$$
\gamma =\tau_{s\inv} \delta
$$
where
$$
\delta t = t w(t;x)\qquad (t \in \cX).
$$
Clearly, $\delta$ is in $S_{(\avR)},$ is not
an inner automorphism of $G,$ and the element
$\tau_x \delta$ is a member of $L(\s).$

Our goal is to show
that $\delta$ is the identity automorphism;
this will imply, as in 1) above, that $\gamma$ is an inner
automorphism, which is impossible.

\begin{claim-num} \label{SecComp}
For every $t \in \cX$ and for every natural
number $k$
$$
w(t;x^k)=w(t;x)^k.
$$
\end{claim-num}

Let $y$ be a member of $\cX$ which is not
equal to $x.$ We start with some two elements
of $L(\s)$ of the form $\tau_y \eta$
where $\eta \in S_{(\widehat R)}$ to gain more information
about $w(*;*_1).$ First, we see that
$\tau_y \delta^\pi$ belongs to $L(\s)$
where $\pi \in \aut G$ interchanges
$x$ and $y,$ while fixing all other
elements of $\cX.$ Second, let $\rho$
be the automorphism of $G$ which
takes  $x$ to $xy$ and fixes
$\cX \setminus \{x\}$ pointwise.
Then
$$
(\tau_x \delta)^\rho \tau_x\inv \delta\inv =
\tau_{xy r\inv x\inv } \delta^\rho \delta\inv \in L(\s),
$$
where $\delta^\rho(x)=xr$ and $r \in \avR.$
As for a suitable element $c$ of $G'$ we have
that $c xy r\inv x\inv =y,$ it follows that
$\tau_y \delta^\rho \delta\inv$ is also
in $L(\s).$ Hence by \eqref{CongModIR},
there exists an $s \in \avR$ with
$$
\delta^\rho \delta\inv = \tau_s \delta^\pi.
$$
Comparing the images of a $t \in \cX \setminus \{x\}$
under the automorphisms participating in the both
parts of the last equation, one obtains that
\begin{equation}
tw(t;xy) w(t;x)\inv = sts\inv w(t;y).
\end{equation}
In particular,
$$
y w(y;xy) w(y;x)\inv = sys\inv.
$$
for $t=y.$

Let $k > 1.$ Consider the endomorphism $\eps$ of $G$
taking $y$ to $x^k$ and fixing $\cX \setminus \{y\}$
pointwise. Then
$$
x^k w(x^k;x^{k+1}) w(x^k;x)\inv = \eps(s) x^k \eps(s)\inv,
$$
whence $x^k = \eps(s) x^k \eps(s)\inv.$
Clearly, $x^k \not\in \avR,$ since $F/R$ is torsion-free
and $\eps(s) \in \avR.$ Hence $\eps(s)=1.$ We then apply $\eps$ to the
both parts of (\theequation), assuming
that $t$ is an arbitrary element of $\cX \setminus \{x\}$:
$$
t w(t;x^{k+1}) w(t;x)\inv= t w(t;x^k).
$$
By the induction hypothesis $w(t;x^k)=w(t;x)^k$
and the result follows.

\begin{claim-num} \label{WeAre_in_gamma3}
For every $t \in \cX$
the element $w(t;x)$ is in $\gamma_3(G),$ the third
term of the lower central series
of $G.$
\end{claim-num}

As we observed above,
$\delta$ fixes $\avR$ pointwise,
and adding to that the fact that
$\delta$ has small conjugacy class, we get that
$$
\delta(r) =r w(r;x)=r,
$$
for all $r \in \avR,$ whence
$w(r;x)=1.$ Also $w(1,x)=1$ and then we can write $w(t;x)$ where $t \in \cX$ as
a product of conjugates of powers of $t$:
$$
w(t;x) = \prod_{i=1}^n x^{k_i} t^{s_i} x^{-k_i}.
$$
Since $w(t,x) \in \widehat R \le G',$ the sum of exponents
$s_i$ is zero. Substitute an arbitrary $r \in \avR$ for $t$
in the last equality and take the
standard derivative, keeping in mind
that $w(r;x)=1$:
$$
0=\ptl( w(r;x) )
= \sum_{i=1}^n s_i \av x^{k_i} \ptl(r)
= \left( \sum_{i=1}^n s_i \av x^{k_i} \right) \ptl(r).
$$
Therefore
\begin{equation}
\sum_{i=1}^n s_i \av x^{k_i} =0,
\end{equation}
Suppose that there are exactly $l$ pairwise
distinct exponents $k_i$ participating
in (\theequation), say, $m_1,\ldots,m_l.$
Due to linear independence of powers
of $\av x$ over $\Z,$ there must be a partition
of $\{1,2,\ldots,n\}$ into $l$ pairwise
disjoint sets
$$
\{1,2,\ldots,n\} = A_1 \sqcup \ldots \sqcup A_l
$$
such that for a particular $j,$ for every $i_1,i_2 \in A_j,$
integers $s_{i_1},s_{i_2}$ are coefficients
of $x^{m_j}$ in (\theequation) and
$$
\sum_{i \in A_j} s_i =0.
$$

Observe that in a nilpotent group $H$ of class two all conjugates
of a given element of $H$ are commuting. Then
we have for every $t \in \cX$:
\begin{align*}
w(t;x) &= \prod_{i=1}^n x^{k_i} t^{s_i} x^{-k_i} \equiv \prod_{j=1}^l \prod_{i \in A_j} (x^{m_j} t^{s_i} x^{-m_j}) \Mod{\gamma_3(G)} \\
       &\equiv \prod_{j=1}^l (x^{m_j} t^{\sum_{i \in A_j}s_i} x^{-m_j})
	\equiv \prod_{j=1}^l (x^{m_j} t^0 x^{-m_j}) \equiv 1 \Mod{\gamma_3(G)}.
\end{align*}

\begin{claim-num}
For every $k \ge 3$ and for every $t \in \cX \setminus \{x\}$
$$
w(t;x) \equiv 1 \Mod{\av\gamma_k(G)}.
$$
Hence $w(t;x)=1$ and $\delta$ is the
identity automorphism.
\end{claim-num}

Claim \ref{WeAre_in_gamma3} takes care of the induction
base. Assume that $w(t;x) \in \av\gamma_k(G),$ that
is,
\begin{equation} \label{Sitting_in_gamma_k}
w(t;x)^m =w(t;x^m) \in \gamma_k(G)
\end{equation}
for some natural number $m > 1$
(the equality is justified by Claim \ref{SecComp}).

Observe that the subgroup $\av\gamma_k(G)$
is invariant under all endomorphisms of $G.$
As $\delta$ has small conjugacy class,
$\delta(z) = zw(z;x)$ for every $z \in G$
by Proposition \ref{SmNormCls}. Hence
for every $t \in \cX$ and for every $q \in \Z$
$$
\delta(t^q) =(t w(t;x))^q =t^q w(t^q;x)
$$
Let $\eps$ be an endomorphism of $G$ taking
$x$ to $x^{ml}$ where $l \in \Z$ and stabilizing
every element of $\cX \setminus \{x\}.$
After application of $\eps$ to the last
equality, we see that
$$
t^q w(t^q, x^{ml}) = (t w(t;x^{ml}))^q.
$$
Since elements of $\gamma_k(G)$ commute
modulo $\gamma_{k+1}(G)$ with all elements
of $G$ and since $w(t;x^{ml})=w(t;x^m)^l \in \gamma_k(G)$
by Claim \ref{SecComp},
$$
t^q w(t^q, x^{ml}) = (t w(t;x^{ml}))^q \equiv t^q w(t;x^{ml})^q \Mod{\gamma_{k+1}(G)}.
$$
Then, again by Claim \ref{SecComp},
\begin{equation} \label{Powers:1st,2nd}
w(t^q,x^{ml}) \equiv w(t;x)^{qml} \Mod{\gamma_{k+1}(G)}.
\end{equation}

Due to invariance of $\gamma_k(G)$ under
endomorphisms of $G,$ we obtain from
\eqref{Sitting_in_gamma_k} that
$w(t^m;x^m) \in \gamma_k(G).$ Therefore
$w(t^m;x^m)$ can be written as a product
of basis commutators of weight $k$ modulo
$\gamma_{k+1}(G)$:
$$
w(t^m;x^m) \equiv \prod_i b_i(t,x) \Mod{\gamma_{k+1}(G)}.
$$
Consider an endomorphism of $G$ taking
both $x$ and $t$ to their squares and apply
it to the last congruence:
\begin{equation}
w(t^{2m};x^{2m}) \equiv \prod_i b_i(t^2,x^2) \Mod{\gamma_{k+1}(G)}.
\end{equation}
It is easy to see that $b_i(t^2,x^2) \equiv b_i(t,x)^{2^k} \Mod{\gamma_{k+1}(G)};$
for instance,
$$
[t^2,x^2,t^2] \equiv [t,x^2,t^2]^2 \equiv
[t,x,t^2]^{2^2} \equiv [t,x,t]^{2^3} \Mod{\gamma_4(G)}.
$$
This implies that the element in the right-hand
side of (\theequation) is congruent to $w(t^m;x^m)^{2^k},$
and, further, to $w(t,x)^{m^2 2^k}$ by \eqref{Powers:1st,2nd}.
By the same equation \eqref{Powers:1st,2nd}, the
element in the left-hand side of (\theequation) is
congruent to $w(t;x)^{4m^2}.$ Therefore
$$
w(t;x)^{4m^2} \equiv w(t;x)^{2^k m^2} \Mod{\gamma_{k+1}(G)},
$$
or
$$
w(t;x)^{(2^k-4)m^2} \equiv 1 \Mod{\gamma_{k+1}(G)}.
$$
As $k \ge 3,$ $2^k > 4,$ and we are done.
\end{proof}

\begin{Th} \label{MR}
Let $F$ be an infinitely generated free group,
$R$ a fully invariant subgroup of $F$ which
is contained in the commutator subgroup
of $F.$ Suppose that the quotient group
$F/R$ is residually torsion-free nilpotent. Then
the automorphism group $\aut{F/R'}$
of the group $F/R'$ is complete.
\end{Th}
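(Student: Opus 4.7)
The plan is to finish the paper by invoking Burnside's theorem, quoted in Remark \ref{Burnside}: once $\inn G$ is shown to be characteristic in $\aut G$, completeness of $\aut G$ follows automatically. The required centerlessness of $G = F/R'$ is standard: since $F$ is infinitely generated and $R \le F'$, the group $F/R$ surjects onto the infinite free abelian group $F/F'$, so the Auslander--Lyndon theorem \cite{AuLyn} applies.

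To establish that $\inn G$ is characteristic, the strategy is to apply Proposition \ref{PrimConjs:Lifting} to a single, very concrete $\sigma \in \aut G$, namely $\sigma = \tau_x$ for $x$ in a basis $\cX$ of $G$ (such a basis is the image in $G$ of any basis of $F$). The three hypotheses of the proposition then have to be checked. Hypothesis (a) is immediate since $\tau_x$ is inner and $G$ is centerless. Hypothesis (b) follows from the isomorphism $S/S_{(\avR)} \cong F/R$ produced in the proof of Lemma \ref{SomeDefReslts}(iv): the class of $\tau_x$ corresponds to $\av x$, which is primitive in $F/R$ because the image of $\cX$ in $F/R$ is a basis. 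For hypothesis (c), I would note that $\nc(\tau_x) \le \inn G$ (since $\inn G$ is normal in $\aut G$ and contains $\tau_x$) and $I_{G'} \le \inn G$, so $L(\tau_x) = \nc(\tau_x) I_{G'} \le \inn G$; combined with the identity $\inn G \cap S_{(\avR)} = I_{\avR}$ given by Lemma \ref{SomeDefReslts}(iii), this shows $L(\tau_x) \cap (S_{(\avR)} \setminus I_{\avR}) = \varnothing$, which is precisely (c).

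With these verifications in place, Proposition \ref{PrimConjs:Lifting} yields the identity $\inn G = L(\tau_x) = \nc(\tau_x) \, I_{G'}$, and it is this identity that transfers the characteristic property. Indeed, conditions (a), (b), (c) are phrased entirely in terms of subgroups already proven to be characteristic in $\aut G$ --- namely $S$ (Proposition \ref{S_is_def}(iv)), $S_{(\avR)}$ (the Hirsch--Plotkin radical of $S$ by Lemma \ref{SomeDefReslts}(ii)), $S' = I_{G'}$, and $I_{\avR} = S_{(\avR)} \cap S'$ --- together with the definable family from Lemma \ref{SomeDefReslts}(iv) and the normal-closure operation. Hence for any $\Phi \in \aut{\aut G}$ the element $\Phi(\tau_x)$ satisfies (a), (b), (c) as well, and a second application of Proposition \ref{PrimConjs:Lifting} gives $\Phi(\inn G) = \Phi(L(\tau_x)) = L(\Phi(\tau_x)) = \inn G$.

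The heavy lifting has already been absorbed into Proposition \ref{PrimConjs:Lifting} and Lemma \ref{SomeDefReslts}, so the principal obstacle is just the careful bookkeeping --- making sure each auxiliary subgroup is explicitly characteristic when the hypotheses are transferred from $\tau_x$ to $\Phi(\tau_x)$ --- rather than any genuinely new group-theoretic argument.
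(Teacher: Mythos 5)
Your proposal is correct and takes the same route as the paper, whose proof of the theorem is simply the citation ``By Burnside's theorem and by Proposition~\ref{PrimConjs:Lifting}.'' What you have done is make explicit the bookkeeping that citation suppresses: centerlessness of $F/R'$ via Auslander--Lyndon, the verification of hypotheses (a), (b), (c) for a concrete $\sigma=\tau_x$ with $x$ primitive, and the fact that these hypotheses (and the operator $L(\cdot)=\nc(\cdot)I_{G'}$) are invariant under every $\Phi\in\aut{\aut G}$ because they are expressed through the characteristic subgroups $S$, $S_{(\avR)}$, $S'=I_{G'}$, $I_{\avR}$ and the definable primitive family of Lemma~\ref{SomeDefReslts}(iv) --- which is precisely how the proposition's concluding claim that $\inn G$ is characteristic is meant to be read.
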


\begin{proof}
By Burnside's theorem and by Proposition
\ref{PrimConjs:Lifting}.
\end{proof}

{\it Acknowledgements.} The author would like to thank Edward Formanek and Vladimir
Shpilrain for helpful information.


\begin{thebibliography}{99}

\bibitem{AuLyn}
M.~Auslander, R.~C.~Lyndon,
Commutator subgroups of free groups,
{ Amer. J. Math.}
{77}
(1955)
929--931.

\bibitem{Burnside}
W.~Burnside,
{Theory of groups of finite order},
Dover Publications, Inc., New York,
1955.

\bibitem{DiNeuTho}
J.~Dixon, P.~M.~Neumann \and S.~Thomas,
Subgroups of small index in infinite symmetric groups,
{ Bull. London Math. Soc.}
{18}
(1986)
580-586.

\bibitem{DFo0}
J.~L.~Dyer, E.~Formanek,
{The automorphism group of a free group is complete},
{ J.  London Math. Soc.}
{11}
(1975)
181--190.

\bibitem{Dyer} J.~L.~Dyer,
A criterion for automorphisms of certain groups to be inner,
{ J. Austral. Math. Soc. Ser. A}
{21}
(1976)
179--184.

\bibitem{DFo2}
J.~L.~Dyer, E.~Formanek,
{Automorphism sequences of free nilpotent group of class two},
{ Math. Proc. Camb. Phil. Soc.}
{79}
(1976)
271--279.


\bibitem{DFo}
J.~L.~Dyer, E.~Formanek,
Characteristic subgroups and complete automorphism groups,
{ Amer. J. Math.}
{99}
(1977)
713--753.

\bibitem{Fo}
E.~Formanek,
{Characterizing a free group in its automorphism group},
{ J. Algebra.}
{133}
(1990),
424--432.

\bibitem{Fox} R.~Fox,
Free differential calculus I,
{ Ann. of Math.}
{37}
547--560.

\bibitem{Hartley}
B.~Hartley,
The residual nilpotence of wreath products,
{ Proc. London Math. Soc.}
{20}
(1970)
365--392.

%\bibitem{Higman}
%G.~Higman,
%The units of group-rings,
%{ Proc. London Math. Soc.}
%{46}
%(1940)
%231--248.

\bibitem{Lam}
T.~Y.~Lam,
{ A first course in noncommutative rings.
Graduate Texts in Mathematics, 131}
Springer-Verlag,
New York,
2001.


\bibitem{Magnus}
W.~Magnus,
On a theorem of Marshall Hall,
{ Ann. of Math. (2)}
{40}
(1939)
764--768.

\bibitem{RemSok}
V.~N.~Remeslennikov, V.~G.~Sokolov,
Certain properties of the Magnus embedding,
{ Algebra i Logika}
{9}
(1970)
566--578.

\bibitem{Shmel'kin}
A.~L.~Shmel'kin,
Free polynilpotent groups,
{ Dokl. Akad. Nauk SSSR}
{151}
(1963)
73--75.


\bibitem{Shmel'kin2}
A.~L.~Shmel'kin,
Two remarks on free solvable groups,
{Algebra i Logika}
6
(1967)
95--109.

\bibitem{To:Towers}
V.~Tolstykh,
{The automorphism tower of a free group},
{ J. London Math. Soc. (2)}
{61}
(2000)
423--440.


\bibitem{To:JLM_Bergman}
V.~Tolstykh,
{On the Bergman property for the automorphism groups of relatively free groups},
{ J. London Math. Soc. (2)}
{73}
(2006)
669--680.


\end{thebibliography}
\end{document}